\documentclass[11pt]{article}

\usepackage{amsmath,amssymb,amsfonts,amsthm}

\usepackage{caption}
\usepackage{subcaption}

\usepackage{tikz}
\usepackage{graphicx}
\usepackage{placeins}
\usetikzlibrary{automata}
\usetikzlibrary{arrows}
\usetikzlibrary{positioning,calc}
\usetikzlibrary{graphs}
\usetikzlibrary{graphs.standard}
\usetikzlibrary{arrows,decorations.markings}
\usepackage{tkz-graph}
\usetikzlibrary{chains,fit,shapes}
\usetikzlibrary{calc}
\tikzset{every loop/.style={min distance=10mm,looseness=10}}
\tikzset{every state/.style={minimum size=2mm}}

\newtheorem{theorem}{Theorem}

\newtheorem{lemma}[theorem]{Lemma}

\newtheorem{proposition}[theorem]{Proposition}

\newtheorem{Fact}[theorem]{Observation}

\title{On the representation number of chessboard graphs}

\author{Sergey Kitaev\footnote{Department of Mathematics and Statistics, University of Strathclyde, 26 Richmond Street, Glasgow G1, 1XH, United Kingdom. 
{\bf Email:} sergey.kitaev@strath.ac.uk.}\ \ and Artem Pyatkin\footnote{Sobolev Institute of Mathematics, Koptyug ave, 4, Novosibirsk, 630090, Russia\  {\bf Email:} artem@math.nsc.ru.}}

\begin{document}
	\maketitle	
	
\begin{abstract}
The representation number of a graph is the smallest integer $k$ such that the graph can be represented by a word in which each vertex appears exactly $k$ times, and two distinct vertices $x$ and $y$ alternate in the word if and only if they are adjacent in the graph.

We extend known results on the representation number for various graph classes to chessboard graphs—namely, king, queen, rook, bishop, and knight graphs. We provide a complete classification for queen graphs and partial classifications or observations for the other classes. As a consequence of our study, we obtain a characterization of all chessboard graphs that are circle graphs. Our work also leads to several interesting open problems.\\

\noindent
{\bf Keywords:} Chessboard graph, representation number, king  graph, queen graph, rook  graph, bishop  graph, knight  graph, word-representable graph, circle graph
\end{abstract}	

\section{Introduction}

Graphs defined by the legal moves of chess pieces on a square or rectangular board — such as those arising from the movements of kings, queens, rooks, bishops, or knights — have been widely studied in discrete mathematics and theoretical computer science. These graphs are commonly referred to in the literature as \emph{chess graphs}~\cite{Cibu2024} or \emph{chessboard graphs}~\cite{SowndaryaNaidu2020}, especially in the context of classic problems involving independence, domination, coloring, or connectivity on rectangular grids. Another common phrasing is \emph{graphs induced by chess piece movements}~\cite{Haynes1998}, which emphasizes the rule-based construction of edges between board positions based on each piece's allowed moves. 

\emph{Word-representable graphs}~\cite{KL15,KP18} generalize various graph families such as 3-colourable graphs, comparability graphs, and circle graphs. The roots of word-representable graphs lie in~\cite{KitSei}, where similar graphs not only allowed the determination of the \emph{free spectrum} of the celebrated \emph{Perkins semigroup} (which has played a central role in semigroup theory since 1960, particularly as a source of examples and counterexamples), but also enabled the solution of the \emph{word problem} for this semigroup. The first systematic study of word-representable graphs appears in~\cite{KitPya08}, and in the last decade, the topic has attracted much attention in the literature. Notably, these graphs were studied not only from the graph-theoretic perspective but also from the word-theoretic perspective, where classes of words were considered and the graphs represented by them were investigated \cite{Fleisch}. The \emph{representation number} of a graph is the minimum number of copies of each letter required to represent the graph~\cite{KL15,KP18} (see below for formal definitions).

In this paper, we extend recent studies on the representation number of grid graphs and cylindric grid graphs~\cite{AKP2025} to chessboard graphs --- specifically, to king, queen, rook, bishop, and knight graphs. We provide a complete classification for queen graphs and partial classifications for the remaining classes, noting that bishop graphs are essentially equivalent to rook graphs in terms of word-representability. Our results also yield a complete characterization of chessboard graphs that are circle graphs. Finally, this work gives rise to several interesting open problems.

\subsection{Word-representable graphs and relevant results}

A graph $G=(V,E)$ is {\em word-representable} if there exists a word $w$ over the alphabet $V$ such that two distinct letters $x$ and $y$ alternate in $w$ if and only if $(x,y)\in E$. It is known \cite{KL15,KP18} that any word-representable graph $G$  is {\em $k$-word-representable} for some $k$; that is, there exists  a word $w$ representing $G$ in which each letter appears exactly $k$ times. The minimum such $k$ is called the {\em representation number} of $G$, and it is denoted by $\mathcal{R}(G)$. Clearly,  $\mathcal{R}(G)=1$ if and only if $G$ is a complete  graph 
$K_n$ for some $n$.
 For a non-word-representable graph $G$, by definition, $\mathcal{R}(G)=\infty$.

The representation number of graphs has been studied in the literature for various classes of graphs --- for example,  {\em crown graphs} \cite{GKP18,HHMO24}, the {\em $k$-dimensional cube} \cite{Bro18,BroZan19,HHMO24}, and {\em grid graphs} and {\em cylindric grid graphs} \cite{AKP2025}. An $m\times n$ {\em grid graph}  $\mbox{Gr}_{m,n}$ has vertex set $V=\{x_{ij}\ |\ i=1,\ldots,m, j=1,\ldots,n\}$ and edge set 
$E=\{(x_{ij},x_{i,j+1})\ |\  i=1,\ldots,m, j=1,\ldots,n-1\} \cup \{(x_{ij},x_{i+1,j})\ |\  i=1,\ldots,m-1, j=1,\ldots,n\}$. A {\em cylindric grid graph} $\mbox{CGr}_{m,n}$ is obtained from $\mbox{Gr}_{m,n}$ by adding edges $(x_{i,1},x_{i,n})$ for all $i=1,\ldots,m$, where we assume that $n\geq 3$. A {\em toroidal grid graph} $\mbox{TGr}_{m,n}$ is obtained from the cylindric grid graph $\mbox{CGr}_{m,n}$ by adding edges $(x_{1,j},x_{m,j})$ for all $j=1,\ldots,n$, where we assume $m,n\geq 3$. In other words, $\mbox{Gr}_{m,n} = P_m \Box P_n$, $\mbox{CGr}_{m,n} = P_m \Box C_n$, and $\mbox{TGr}_{m,n} = C_m \Box C_n$, where $G \Box H$ denotes the Cartesian product of graphs $G$ and $H$, and $P_n$ (resp., $C_n$) is the path (resp., cycle) on $n$ vertices. Throughout the paper, w.l.o.g., we assume $m \le n$ because of the symmetry of the chessboard graphs.

It is known~\cite{KL15,KP18} that the class of \emph{circle graphs}, excluding complete graphs, coincides precisely with the class of graphs having representation number~2. In particular, $\mathcal{R}(P_n) = \mathcal{R}(\mbox{Gr}_{1,n}) = 2$ and $\mathcal{R}(C_n) = \mathcal{R}(\mbox{CGr}_{1,n}) = 2$ for $n \geq 3$. 

Of importance to our work are studies on graphs with representation number~3, as conducted in~\cite{AKP2025,Kit13,KitPya08} (see also Section~5.2 in~\cite{KL15} for a summary). In particular, the \emph{$n$-prism graph} $\mbox{Pr}_n=C_n\mathbin{\Box}P_2$ has vertex set
$\{u_1,\ldots,u_n,v_1,\ldots,v_n\}$ and edge set
\[
\{u_iu_{i+1},v_iv_{i+1},u_iv_i:1\leq i\leq n\},
\]
where subscripts are taken modulo~$n$. Thus it consists of two cycles $C_n$ joined by a matching between corresponding vertices. Every prism graph has representation number~3.

Moreover, it is known that the representation number of a ladder graph is 2~\cite{Kit13}; thus, $\mathcal{R}(\mbox{Gr}_{2,n}) = 2$ (a ladder graph) for $n \geq 2$, and $\mathcal{R}(\mbox{Gr}_{2,1}) = \mathcal{R}(\mbox{Gr}_{1,1}) = 1$ (a complete graph). For $m,n \geq 3$, we have $\mathcal{R}(\mbox{Gr}_{m,n}) = 3$ and $\mathcal{R}(\mbox{CGr}_{m,n}) = 3$ \cite{AKP2025}.

All of the chessboard graphs considered in this paper and defined below contain grid graphs as (non-induced) subgraphs, with the exception of knight graphs. Thus, the research presented here can be seen as an extension of the results in~\cite{AKP2025,Kit13} to interesting classes of graphs that have been studied in the literature from various perspectives.

\subsection{King  graphs} A king  graph is a graph that represents all legal moves of the king chess piece on a chessboard, where each vertex corresponds to a square on the board and each edge corresponds to a legal move~\cite{Ma,VWSFN}. More specifically, the {\em king  graph} $\mathrm{King}_{m,n}$ has the vertex set $\{ x_{i,j} \mid i=1,\ldots,m,\ j=1,\ldots,n \}$ and an edge between distinct vertices $x_{a,b} \ne x_{c,d}$ if and only if $|a - c| \le 1$ and $|b - d| \le 1$.

For a fixed $j$, we call the subgraph of $\mathrm{King}_{m,n}$ induced by the vertices $x_{i,j}$, $i=1,\ldots,m$, the {\em $j^{\rm th}$ layer} of $\mathrm{King}_{m,n}$. Each layer is the path $P_m$. The graph $\mbox{King}_{3,5}$ is shown in Figure~\ref{3x5-torus-grid-graph} on the left.

The enumeration of the different placements of the maximum number of non-attacking kings on a chessboard in~\cite{Wilf}, that is, placements in which no two squares containing kings share a point, is directly related to counting maximal independent sets in king graphs. Our results on king  graphs, proved in Section~\ref{king-sec}, are summarized in Table~\ref{kings-results}.%, where we assume, w.l.o.g., that $n\geq m$  (since $\mathcal{R}(\mbox{King}_{m,n})=\mathcal{R}(\mbox{King}_{n,m}))$.

\begin{table}[t]
  \centering
  \begin{subtable}[t]{0.48\textwidth}
    \centering
    \begin{tabular}{|c||c|c|c|c|}
      \hline
      $m \backslash n$ & 1 & 2 & 3 & $\geq 4$ \\
      \hline\hline
	      1 &   1 &  1 &  2 &  2 \\
      \hline
	      2 &  1 &  1 & 2  &  2 \\
      \hline
      3 & 2  &  2 &  3 &  3 \\
      \hline
      4 & 2  & 2  &  3  &  3 \\
      \hline
    \end{tabular}
    \caption{The number $\mathcal{R}(\mathrm{King}_{m,n})$, which is finite for all $m, n \geq 1$.}
    \label{kings-results}
  \end{subtable}
  \hfill
  \begin{subtable}[t]{0.48\textwidth}
    \centering
    \begin{tabular}{|c||c|c|c|c|c|}
      \hline
      $m \backslash n$ & 1 & 2 & 3 & 4 & $\geq 5$ \\
      \hline\hline
      1 &  1  & 1  & 1  & 1 & 1 \\
      \hline
      2 & 1  &  1  &   2 & 2  & 3 \\
      \hline
      3 & 1  &  2 &  $\infty$ &   $\infty$ &   $\infty$ \\
      \hline
      4 & 1  &  2 &  $\infty$ &   $\infty$ &   $\infty$ \\
      \hline
      $\geq 5$ & 1  &  3 &  $\infty$ &   $\infty$ &   $\infty$ \\
      \hline
    \end{tabular}
    \caption{The number $\mathcal{R}(\mbox{Queen}_{m,n})$.}
    \label{queen-results}
  \end{subtable}
  \caption{Representation numbers of king  and queen graphs.}
  \label{fig:king-queen-tables}
\end{table}

\begin{figure}
\begin{center}
\begin{tabular}{ccc}
\begin{tikzpicture}[scale=1.2, every node/.style={circle, draw, inner sep=0.5pt}]
  % Define number of rows and columns
  \def\rows{3}
  \def\cols{5}

  % Draw vertices
  \foreach \i in {1,2,3} {
    \foreach \j in {1,2,3,4,5} {
      \node (x\i\j) at (\j,-\i) {$x_{\i,\j}$};
    }
  }

  % Draw horizontal edges
  \foreach \i in {1,2,3} {
    \foreach \j in {1,2,3,4} {
      \draw (x\i\j) -- (x\i\the\numexpr\j+1\relax);
    }
  }

  % Draw vertical edges
  \foreach \i in {1,2} {
    \foreach \j in {1,2,3,4,5} {
      \draw (x\i\j) -- (x\the\numexpr\i+1\relax\j);
    }
  }
  
\draw (x11) to (x22);  \draw (x12) to (x23);  \draw (x13) to (x24);  \draw (x14) to (x25);  
\draw (x21) to (x32);  \draw (x22) to (x33);  \draw (x23) to (x34);  \draw (x24) to (x35);   
\draw (x21) to (x12);  \draw (x22) to (x13);  \draw (x23) to (x14);  \draw (x24) to (x15);  
\draw (x31) to (x22);  \draw (x32) to (x23);  \draw (x33) to (x24);  \draw (x34) to (x25);

\end{tikzpicture}

& & 

\begin{tikzpicture}[scale=1.2, every node/.style={circle, draw, inner sep=0.5pt}]
  % Define number of rows and columns (manually adjust since column 5 is removed)
  \def\rows{3}
  \def\cols{4}

  % Draw vertices (excluding j = 5)
  \foreach \i in {1,2,3} {
    \foreach \j in {1,2,3,4} {
      \node (x\i\j) at (\j,-\i) {$x_{\i,\j}$};
    }
  }

  % Draw horizontal edges
  \foreach \i in {1,2,3} {
    \foreach \j in {1,2,3} {
      \draw (x\i\j) -- (x\i\the\numexpr\j+1\relax);
    }
  }

  % Draw vertical edges
  \foreach \i in {1,2} {
    \foreach \j in {1,2,3,4} {
      \draw (x\i\j) -- (x\the\numexpr\i+1\relax\j);
    }
  }

  % Draw diagonal edges (excluding any involving column 5)
  \draw (x11) -- (x22);  \draw (x12) -- (x23);  \draw (x13) -- (x24);
  \draw (x21) -- (x32);  \draw (x22) -- (x33);  \draw (x23) -- (x34);
  \draw (x21) -- (x12);  \draw (x22) -- (x13);  \draw (x23) -- (x14);
  \draw (x31) -- (x22);  \draw (x32) -- (x23);  \draw (x33) -- (x24);

  % The four length-two diagonal moves.
  \draw[bend right=12] (x11) to (x33);
  \draw[bend right=12] (x12) to (x34);
  \draw[bend left=12] (x31) to (x13);
  \draw[bend left=12] (x32) to (x14);

  % Curved non-oriented arcs within each row (excluding adjacent vertices)
  \foreach \r in {1,2,3} {
    \foreach \i in {1,2} {
      \foreach \j in {\the\numexpr\i+2,\the\numexpr\i+3} {
        \ifnum\j<5
          \draw[bend left=40] (x\r\i) to (x\r\j);
        \fi
      }
    }
  }

\foreach \j in {1,2,3,4} {
  \draw[bend left=40] (x1\j) to (x3\j);
}

\end{tikzpicture}
\end{tabular}

\caption{The king  graph $\mbox{King}_{3,5}$ and the queen graph $\mbox{Queen}_{3,4}$.}\label{3x5-torus-grid-graph}
\end{center}
\end{figure}

\subsection{Queen graphs} A {\em queen graph} is an undirected graph that represents all legal moves of the queen chess piece on a chessboard~\cite{HHHH}. In the graph, each vertex corresponds to a square on the chessboard, and an edge connects two vertices if a queen can move between the corresponding squares; that is, if the squares lie in the same row, column, or diagonal. If the chessboard has dimensions $m \times n$, the resulting graph is % called the $m \times n$ queen graph, 
denoted $\mbox{Queen}_{m,n}$. The graph $\mbox{Queen}_{3,4}$ is shown in Figure~\ref{3x5-torus-grid-graph} on the right.  Our results on queen graphs, proved in Section~\ref{queen-sec}, are summarized in Table~\ref{queen-results}.%, where we assume, w.l.o.g., that $n\geq m$  (since $\mathcal{R}(\mbox{Queen}_{m,n})=\mathcal{R}(\mbox{Queen}_{n,m}))$.

\subsection{Rook  graphs} A \emph{rook  graph}, denoted $\mathrm{Rook}_{m,n}$, is an undirected graph representing all legal moves of the rook chess piece on an $m \times n$ chessboard~\cite{BLL}. Each vertex corresponds to a square of the chessboard, and there is an edge between any two vertices that lie in the same row or the same column.
%---exactly the squares between which a rook can move. 
Rook  graphs are significant in graph theory due to their alternative constructions: they are Cartesian products of two complete graphs (and hence word-representable~\cite{KL15}), and they are also line graphs of complete bipartite graphs. The square rook  graphs form the class of \emph{two-dimensional Hamming graphs}. The graph $\mathrm{Rook}_{4,4}$ is shown in Figure~\ref{4x4-rook-graph} on the left.

Note that the toroidal graph $\mathrm{TGr}_{m,n}$ is a (non-induced if $\max\{m,n\} \geq 4$) subgraph of $\mathrm{Rook}_{m,n}$, while $\mathrm{TGr}_{3,3} = \mathrm{Rook}_{3,3}$. Our results on rook  graphs, proved in Section~\ref{rook-sec}, are summarized in Table~\ref{rook-results}.

\begin{table}[t]
  \centering
  \begin{subtable}[t]{0.47\textwidth}
    \centering
    \begin{tabular}{|c||c|c|c|c|}
      \hline
      $m \backslash n$ & 1 & 2 & 3 & $\geq 4$ \\
      \hline\hline
      1 &   1 &  1 &  1 &  1 \\
      \hline
      2 &  1 &  2 &  3  &  3 \\
      \hline
	      3 &  1 &  3 &  3 &  $\mathrm{open}^{*}$ \\
      \hline
    \end{tabular}
	    \caption{The number $\mathcal{R}(\mbox{Rook}_{m,n})$, which is finite for all $m, n \geq 1$. The asterisk marks a case in which computer search suggests a lower bound of~4, but no proof is known.}
    \label{rook-results}
  \end{subtable}
  \hfill
  \begin{subtable}[t]{0.47\textwidth}
    \centering
    \begin{tabular}{|c||c|c|c|c|}
      \hline
      $m \backslash n$ & 1 & 2 & 3 & $\geq 4$ \\
      \hline\hline
	      1 &   1 &  2 &  2 &  2 \\
      \hline
      2 &  2 &  2 &  2  &  2 \\
      \hline
      3 &  2 &  2 &  2 &  2 \\
      \hline
      4 &  2 &  2 &  2 &  $\geq 3$ \\
      \hline
    \end{tabular}
    \caption{The number $\mathcal{R}(\mbox{Bishop}_{m,n})$, which is finite for all $m, n \geq 1$.}
    \label{bishop-results}
  \end{subtable}
  \caption{Representation numbers of rook  and bishop  graphs.}
  \label{fig:rook-bishop-tables}
\end{table}

\subsection{Bishop  graphs} A {\em bishop  graph}, denoted $\mbox{Bishop}_{m,n}$, is an undirected graph that encodes all legal moves of the bishop chess piece on an $m \times n$ chessboard \cite{Bur2016,SowndaryaNaidu2020}. Each vertex corresponds to a square of the chessboard, and two vertices are adjacent if  
%the bishop can move between the corresponding squares—that is, if 
they lie on the same diagonal. Unlike other pieces, bishop graphs are disconnected because bishops are restricted to one color class of the board, and each connected component of $\mbox{Bishop}_{m,n}$ corresponds to one color class. An example of the graph $\mbox{Bishop}_{4,4}$ is shown in Figure~\ref{4x4-rook-graph} on the right. Our results on bishop  graphs, proved in Section~\ref{bishop-sec}, are summarized in Table~\ref{bishop-results}.%, where we assume, w.l.o.g., that $n\geq m$  (since $\mathcal{R}(\mbox{Bishop}_{m,n})=\mathcal{R}(\mbox{Bishop}_{n,m}))$.

\begin{figure}
\begin{center}

\begin{tabular}{ccc}

\begin{tikzpicture}[scale=1.5, every node/.style={circle, draw, inner sep=0.5pt}]
  % Define number of rows and columns
  \def\rows{3}
  \def\cols{5}

  % Draw vertices
  \foreach \i in {1,2,3,4} {
    \foreach \j in {1,2,3,4} {
      \node (x\i\j) at (\j,-\i) {$x_{\i,\j}$};
    }
  }

  % Draw horizontal edges
  \foreach \i in {1,2,3,4} {
    \foreach \j in {1,2,3} {
      \draw (x\i\j) -- (x\i\the\numexpr\j+1\relax);
    }
  }

  % Draw vertical edges
  \foreach \i in {1,2,3} {
    \foreach \j in {1,2,3,4} {
      \draw (x\i\j) -- (x\the\numexpr\i+1\relax\j);
    }
  }
  
  % Draw curved edges: first \UTF{2194} last vertex in each row
\draw[bend left=25] (x11) to (x13); \draw[bend left=30] (x11) to (x14); \draw[bend left=25] (x12) to (x14);% top row
\draw[bend left=25] (x21) to (x23); \draw[bend left=30] (x21) to (x24); \draw[bend left=25] (x22) to (x24);% second row
\draw[bend left=25] (x31) to (x33); \draw[bend left=30] (x31) to (x34); \draw[bend left=25] (x32) to (x34);% third row
\draw[bend left=25] (x41) to (x43); \draw[bend left=30] (x41) to (x44); \draw[bend left=25] (x42) to (x44);% fourth row

  % Draw curved edges: top \UTF{2194} bottom vertex in each column
\draw[bend left=25] (x11) to (x31); \draw[bend left=25] (x21) to (x41); \draw[bend left=30] (x11) to (x41);% first column
\draw[bend left=30] (x12) to (x32); \draw[bend left=25] (x22) to (x42); \draw[bend left=30] (x12) to (x42);% second column
\draw[bend left=30] (x13) to (x33); \draw[bend left=25] (x23) to (x43); \draw[bend left=30] (x13) to (x43);% third column
\draw[bend left=30] (x14) to (x34); \draw[bend left=25] (x24) to (x44); \draw[bend left=30] (x14) to (x44);% fourth column
 
\end{tikzpicture}

&&

\begin{tikzpicture}[scale=1.5, every node/.style={circle, draw, inner sep=0.5pt}]
  % Draw vertices
  \foreach \i in {1,2,3,4} {
    \foreach \j in {1,2,3,4} {
      \node (x\i\j) at (\j,-\i) {$x_{\i,\j}$};
    }
  }

  % Draw bishop diagonal edges: top-left to bottom-right (i - j = const)
  \foreach \i in {1,2,3} {
    \foreach \j in {1,2,3} {
      \pgfmathtruncatemacro{\k}{\i+1}
      \pgfmathtruncatemacro{\l}{\j+1}
      \ifnum\k<5
        \ifnum\l<5
          \draw (x\i\j) -- (x\k\l);
        \fi
      \fi
    }
  }

  \foreach \i in {2,3,4} {
    \foreach \j in {1,2,3} {
      \pgfmathtruncatemacro{\k}{\i-1}
      \pgfmathtruncatemacro{\l}{\j+1}
      \ifnum\k>0
        \ifnum\l<5
          \draw (x\i\j) -- (x\k\l);
        \fi
      \fi
    }
  }

  % Curved bishop edges for longer diagonals (i-j and i+j = const)
  \draw[bend left=20] (x11) to (x33);
  \draw[bend left=20] (x12) to (x34);
  \draw[bend left=-20] (x21) to (x43);
  \draw[bend left=-20] (x13) to (x31);
  \draw[bend left=20] (x24) to (x42);
  \draw[bend left=20] (x14) to (x32);
  \draw[bend left=20] (x22) to (x44);
  \draw[bend left=-20] (x41) to (x23);
\draw[bend left=-15] (x11) to (x44);
\draw[bend left=-15] (x14) to (x41);

\end{tikzpicture}

\end{tabular}

\caption{The rook  graph $\mbox{Rook}_{4,4}$ and the bishop  graph $\mbox{Bishop}_{4,4}$.}\label{4x4-rook-graph}
\end{center}
\end{figure}

\subsection{Knight  graphs}

A {\em knight  graph}, denoted $\mbox{Knight}_{m,n}$, is an undirected graph that represents all legal moves of the knight chess piece on an $m \times n$ chessboard \cite{Watkins1996}. Each vertex corresponds to a square on the board, and two vertices are adjacent if %a knight can move between the corresponding squares in a single legal move — that is, in an 
the corresponding squares form an L-shaped pattern: two steps in one direction (horizontal or vertical) followed by one step orthogonal to that direction. 
Note that knight graphs are always bipartite and they are disconnected for $m\le 2$ and $m=n=3$. The graph $\mbox{Knight}_{4,4}$ is shown in Figure~\ref{4x4-knight-graph}. Our results on knight graphs, proved in Section~\ref{knight-sec}, are summarized in Table~\ref{knight-results}.%, where we assume, w.l.o.g., that $n\geq m$  (since $\mathcal{R}(\mbox{Knight}_{m,n})=\mathcal{R}(\mbox{Knight}_{n,m}))$.

\begin{figure}[ht]
\begin{center}
\begin{tikzpicture}[scale=1.5, every node/.style={circle, draw, inner sep=1pt}]
  % Create 4x4 grid of vertices
  \foreach \i in {1,2,3,4} {
    \foreach \j in {1,2,3,4} {
      \node (x\i\j) at (\j,-\i) {$x_{\i,\j}$};
    }
  }

  % Add knight move edges manually using relative offsets
  \foreach \i in {1,2,3,4} {
    \foreach \j in {1,2,3,4} {
      \foreach \di/\dj in {2/1, 2/-1, 1/2, 1/-2} {
        \pgfmathtruncatemacro{\ni}{\i+\di}
        \pgfmathtruncatemacro{\nj}{\j+\dj}
        \ifnum\ni>0 \ifnum\ni<5 \ifnum\nj>0 \ifnum\nj<5
          \draw (x\i\j) -- (x\ni\nj);
        \fi\fi\fi\fi
      }
    }
  }
\end{tikzpicture}
\caption{The knight graph $\mathrm{Knight}_{4,4}$.}\label{4x4-knight-graph}
\end{center}
\end{figure}

\begin{figure}[ht]
\begin{center}
\begin{tikzpicture}[scale=1.05, every node/.style={circle, draw, inner sep=1.2pt}]
  \node (x11) at (-3,1) {$x_{1,1}$};
  \node (x12) at (-1,1.6) {$x_{1,2}$};
  \node (x13) at (1,1) {$x_{1,3}$};
  \node (x23) at (3,0) {$x_{2,3}$};
  \node (x33) at (1,-1) {$x_{3,3}$};
  \node (x32) at (-1,-1.6) {$x_{3,2}$};
  \node (x31) at (-3,-1) {$x_{3,1}$};
  \node (x21) at (-4,0) {$x_{2,1}$};

  % Consecutive vertices on the boundary.
  \draw (x11)--(x12)--(x13)--(x23)--(x33)--(x32)--(x31)--(x21)--(x11);
  % Remaining row and column edges, routed outside the centre where possible.
  \draw[bend left=0] (x11) to (x13);
  \draw[bend right=0] (x31) to (x33);
  \draw[bend right=18] (x11) to (x31);
  \draw[bend left=18] (x13) to (x33);
  \draw (x21)--(x23);
  \draw[bend left=22] (x12) to (x32);
  % Diagonal edges.
  \draw[bend left=0] (x11) to (x33);
  \draw[bend right=16] (x13) to (x31);
  \draw[bend right=-16] (x12) to (x21);
  \draw[bend left=-16] (x12) to (x23);
  \draw[bend right=16] (x32) to (x21);
  \draw[bend right=16] (x23) to (x32);
\end{tikzpicture}
\caption{The neighbourhood of $x_{2,2}$ in $\mathrm{Queen}_{3,3}$.}\label{queen-neighborhood}
\end{center}
\end{figure}

\begin{table}[t]
\begin{center}
\begin{tabular}{|c||c|c|c|c|c|}
\hline
$m \backslash n$ & 1 & 2 & 3 & 4 & $\geq 5$ \\
\hline
\hline
	1 &   1 &  2 &  2 &  2 & 2 \\
\hline
2 &  2 &  2 &  2  &  2  & 2 \\
\hline
3 &  2 &  2 &  2 &  2 & $\geq 3$\\
\hline
4 &  2 &  2 &  2 &  $\geq 3$ & $\geq 3$ \\
\hline
\end{tabular}
\end{center}
\caption{The number $\mathcal{R}(\mbox{Knight}_{m,n})$, which is finite for all $m, n \geq 1$.}\label{knight-results}
\end{table}

\section{Results and Proofs}

\subsection{King  graphs}\label{king-sec}

It follows from \cite{AKM} (see also \cite[Theorem~4.6.5]{KL15}) that $\mbox{King}_{m,n}$ is word-representable, and hence $\mathcal{R}(\mbox{King}_{m,n})<\infty$ for any $m,n\geq 1$. Clearly, $\mathcal{R}(\mbox{King}_{1,1})= \mathcal{R}(\mbox{King}_{1,2})=\mathcal{R}(\mbox{King}_{2,2})=1$ and $\mathcal{R}(\mbox{King}_{1,n})=\mathcal{R}(P_n)=2$ for $n\geq 3$.

\begin{theorem}\label{King-2n-thm} We have $\mathcal{R}(\mathrm{King}_{2,n})=2$ for $n\geq 3$. \end{theorem}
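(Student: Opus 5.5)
The lower bound is immediate: for $n\geq 3$ the graph $\mathrm{King}_{2,n}$ is not complete (for example, $x_{1,1}$ and $x_{1,3}$ are not adjacent), so $\mathcal{R}(\mathrm{King}_{2,n})\geq 2$. The whole task is therefore to exhibit a word in which each of the $2n$ letters appears exactly twice and which represents $\mathrm{King}_{2,n}$.

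The structural observation I would use is that each layer $\{x_{1,j},x_{2,j}\}$ is a clique of two ``true twins''. Both vertices are adjacent to each other and have the same neighbourhood outside the layer, namely layers $j-1$ and $j+1$. Hence $\mathrm{King}_{2,n}$ is obtained from the path $P_n$ (vertices $1,\dots,n$) by replacing every vertex $j$ with the clique $\{x_{1,j},x_{2,j}\}$. So I will start from a $2$-uniform word representing $P_n$ and substitute blocks. For the path I take
\[
w = 1\,2\,1\,3\,2\,4\,3\,5\,4\cdots n\,(n-1)\,n ,
\]
in which each letter occurs twice. Consecutive letters $j,j+1$ give the subword $j\,(j+1)\,j\,(j+1)$, so they alternate. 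For $|j-k|\geq 2$, both occurrences of $j$ precede both occurrences of $k$ whenever $k\geq j+2$, so they do not alternate. A short check of the pattern will confirm both claims.

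Next I replace every occurrence of the letter $j$ in $w$ by the same two-letter block $x_{1,j}x_{2,j}$, obtaining a word $w'$ with each letter appearing exactly twice. I then verify three cases.
\begin{itemize}
\item Within a layer, the restriction of $w'$ to $\{x_{1,j},x_{2,j}\}$ is $x_{1,j}x_{2,j}x_{1,j}x_{2,j}$, which alternates, as required.
\item For $u$ in layer $j$ and $v$ in layer $k\ne j$, the blocks of different layers do not interleave internally. So the restriction of $w'$ to $\{u,v\}$ is obtained from the restriction of $w$ to $\{j,k\}$ by renaming $j\mapsto u$ and $k\mapsto v$.
\item Consequently $u$ and $v$ alternate in $w'$ if and only if $j$ and $k$ alternate in $w$, that is, if and only if $|j-k|=1$. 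This is exactly adjacency in $\mathrm{King}_{2,n}$ between vertices of different layers.
\end{itemize}
Thus $w'$ represents $\mathrm{King}_{2,n}$, which gives $\mathcal{R}(\mathrm{King}_{2,n})\le 2$.

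No step here is a real obstacle. The only point needing care is that every occurrence of $j$ is replaced by the same ordered block, so that two letters of one layer alternate and the restriction to letters from distinct layers faithfully copies the path word. If a more explicit presentation is preferred, I would also write out $w'$ for $n=3$ as a sanity check:
\[
x_{1,1}x_{2,1}\,x_{1,2}x_{2,2}\,x_{1,1}x_{2,1}\,x_{1,3}x_{2,3}\,x_{1,2}x_{2,2}\,x_{1,3}x_{2,3}.
\]
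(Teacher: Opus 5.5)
Your proof is correct and follows the paper's argument. The lower bound comes from non-completeness, and the upper bound comes from taking a $2$-representation of the path $P_n$ and replacing every occurrence of letter $j$ by the block $x_{1,j}x_{2,j}$ of true twins; the only difference is that you write down and check an explicit path word, whereas the paper cites a known $2$-representation of $P_n$.
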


\begin{proof} Since $\mbox{King}_{2,n}$ is not complete, $\mathcal{R}(\mbox{King}_{2,n})>1$. Consider an arbitrary 2-representation $w$ of the path $P_n$ (see, e.g. \cite[Sections 3.1 and 3.2]{KL15}) to represent the top row in $\mbox{King}_{2,n}$. The representation of $\mbox{King}_{2,n}$ is then obtained from $w$ by replacing each letter $x_{1,i}$ by $x_{1,i}x_{2,i}$. Indeed, $(x_{1,i},x_{2,i})$ is an edge in $\mbox{King}_{2,n}$,
and a vertex $v$ is adjacent to $x_{2,i}$ if and only if $v$ is adjacent to $x_{1,i}$ for every $v\not\in \{x_{1,i},x_{2,i}\}$. \end{proof}

The proof of the following lemma is analogous to the proof of Lemma 3 in \cite{AKP2025}.

\begin{lemma}\label{lem-Gr33} We have $\mathcal{R}(\mathrm{King}_{3,3})\geq 3$. \end{lemma}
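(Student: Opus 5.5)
Since $\mathrm{King}_{3,3}$ is not complete, $\mathcal{R}(\mathrm{King}_{3,3})\ge 2$. It therefore suffices to rule out a $2$-representation, i.e.\ to show that $\mathrm{King}_{3,3}$ is not a circle graph. The plan is to exploit the centre vertex $x_{2,2}$, which is adjacent to every other vertex.

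Suppose $w$ is a $2$-uniform word representing $\mathrm{King}_{3,3}$. Cyclic shifts of a uniform word preserve alternation, so we may shift $w$ to start with $x_{2,2}$, giving $w=x_{2,2}\,A\,x_{2,2}\,B$. Every other letter alternates with $x_{2,2}$, so each occurs exactly once in $A$ and once in $B$. Two letters $u,v$ then alternate in $w$ if and only if their relative order is the same in $A$ and in $B$. Hence $H=\mathrm{King}_{3,3}-x_{2,2}$ would be a permutation graph, with adjacency meaning ``same order in both permutations''. It is therefore enough to show that $H$ is not a permutation graph. Equivalently, we need that $H$ or its complement $\overline{H}$ is not a comparability graph.

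The graph $H$ has a clear structure.
\begin{itemize}
\item The edge-middles $x_{1,2},x_{2,3},x_{3,2},x_{2,1}$ induce a $C_4$.
\item Each corner is adjacent exactly to the two middles next to it.
\end{itemize}
In the complement $\overline{H}$:
\begin{itemize}
\item the four corners form a $K_4$;
\item the opposite middles $x_{1,2}x_{3,2}$ and $x_{2,1}x_{2,3}$ are edges;
\item each corner is adjacent to the two middles not next to it.
\end{itemize}
I would run the standard forcing ($\Gamma$-relation) argument on both graphs. Orient one edge, then propagate using the rule that for a path $a-b-c$ with $a,c$ nonadjacent, the edges must be oriented $a\to b\leftarrow c$ or $a\leftarrow b\to c$. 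The goal is to reach a contradiction, either a forced edge oriented both ways or an intransitive triple, in whichever of $H,\overline{H}$ it occurs.

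A quick check suggests $H$ itself may admit a transitive orientation. In that orientation the middle $C_4$ alternates, and each corner is placed between its two middles. So I expect the contradiction to come from $\overline{H}$. There the $K_4$ on the corners must be oriented as a linear order. The path constraints, such as corner--opposite-middle--corner paths with nonadjacent endpoints, then force incompatible directions on edges between corners.

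The main obstacle is this case analysis. If the forcing argument becomes messy, I would fall back on a direct word argument, as in Lemma~3 of \cite{AKP2025}. Fix the positions of the two copies of $x_{1,1}$ in a $2$-uniform word. Then place $x_{1,2},x_{2,1}$, which must alternate with $x_{1,1}$ and with each other, and $x_{3,3}$, which must not alternate with $x_{1,1}$. Using the symmetry of the board, show by cases that the remaining middles cannot satisfy all their required alternations and non-alternations.
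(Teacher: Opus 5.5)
Your reduction is correct. Writing $w=x_{2,2}Ax_{2,2}B$, the graph $H=\mathrm{King}_{3,3}-x_{2,2}$ must be a permutation graph. The direction you actually need, that $\overline{H}$ is then transitively orientable, is elementary: orient each edge of $\overline{H}$ by the order of its endpoints in $A$. Your check that $H$ itself is transitively orientable is also right. However, the proof stops exactly at the only nontrivial step, and the mechanism you propose for it cannot work. In $\overline{H}$ the four corners are pairwise adjacent, so a corner--middle--corner path never has nonadjacent endpoints and forces nothing. As written, the contradiction in $\overline{H}$ is asserted, not derived. Your fallback word argument is too vague to assess, but it is not needed.

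The forcing that does work is centred at a corner. In $\overline{H}$, two non-central squares are adjacent iff their rows or their columns differ by $2$, so $x_{1,1}$ is adjacent to $x_{1,3},x_{3,1},x_{3,3},x_{2,3},x_{3,2}$. In $\overline{H}$, $x_{3,2}$ is nonadjacent to $x_{3,1},x_{3,3},x_{2,3}$, and $x_{2,3}$ is nonadjacent to $x_{1,3}$. Your path rule therefore forces all five edges at $x_{1,1}$ to point the same way relative to $x_{1,1}$, so $x_{1,1}$ is a source or a sink in any transitive orientation. By the symmetry of the board the same holds for every corner. Two adjacent vertices cannot both be sources or both be sinks, yet among the pairwise adjacent corners $x_{1,1},x_{1,3},x_{3,1}$ two must have the same type.

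With this paragraph added, your proof is complete and genuinely different from the paper's. The paper works directly with the word. It restricts $w$ to $x_{2,2}$ and the corners, obtaining $x_{2,2}\,\pi\,x_{2,2}\,\pi^{\mathrm{rev}}$. It then reduces the $24$ choices of $\pi$ to three up to the symmetries of the square and refutes each case with a specific middle vertex. Your route encodes the same information (the reversed corner order is exactly the $K_4$ in $\overline{H}$) in comparability language. It replaces the case analysis by a one-line forcing argument, at the cost of invoking the permutation/comparability correspondence.
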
 

\begin{proof}
Since $\mathrm{King}_{3,3}$ is not complete, $\mathcal{R}(\mathrm{King}_{3,3}) > 1$; therefore, it is sufficient to prove that $\mathcal{R}(\mathrm{King}_{3,3}) \neq 2$.

Consider the subset $N=\{x_{1,1}, x_{1,3}, x_{3,1}, x_{3,3}\}$ of pairwise non-adjacent vertices, each of which is adjacent to $x_{2,2}$. Suppose that $\mathrm{King}_{3,3}$ is 2-represented by a word $w$. After a cyclic shift, the restriction of $w$ to $N\cup\{x_{2,2}\}$ has the form
\[
x_{2,2}\,\pi\,x_{2,2}\,\pi^{\mathrm{rev}},
\]
where $\pi$ is a permutation of the four letters in $N$. Indeed, alternation with $x_{2,2}$ places one occurrence of each letter of $N$ between the two occurrences of $x_{2,2}$ and the other outside them. Moreover, since the letters of $N$ are pairwise non-adjacent, their relative order in the two blocks must be reversed. The eight symmetries of the square have three orbits on the $24$ possible orders $\pi$; choosing one representative from each orbit gives the following three possibilities:
\vspace{-2mm}
\begin{align*}
&\boxed{x_{2,2}}x_{1,1} x_{1,3} x_{3,1} x_{3,3}\boxed{x_{2,2}} x_{3,3} x_{3,1} x_{1,3} x_{1,1},\\
&\boxed{x_{2,2}}x_{1,1} x_{1,3} x_{3,3} x_{3,1}\boxed{x_{2,2}} x_{3,1} x_{3,3} x_{1,3} x_{1,1},\\
&\boxed{x_{2,2}}x_{1,1} x_{3,3} x_{1,3} x_{3,1}\boxed{x_{2,2}} x_{3,1} x_{1,3} x_{3,3} x_{1,1}.
\end{align*}

In the first case, one copy of $x_{2,3}$ must lie between the two occurrences of $x_{3,3}$; then the other copy cannot be between those of $x_{1,3}$. Hence, $x_{2,3}$ and $x_{3,1}$ must be adjacent in $\mathrm{King}_{3,3}$, which is a contradiction. In the remaining two cases, one copy of $x_{2,1}$ must lie between the occurrences of $x_{3,1}$, and the second one --- outside the two occurrences of $x_{1,1}$. Thus, $x_{2,1}$ and $x_{1,3}$ must be adjacent in $\mathrm{King}_{3,3}$, again a contradiction.

Therefore, $\mathcal{R}(\mathrm{King}_{3,3}) \geq 3$.
\end{proof}

If a letter $x$ occurs several times in a word $w$, denote by $x^{(j)}$ the $j^{\rm th}$ occurrence of $x$ in $w$. In what follows, a \emph{factor} of a word is a contiguous subword. We write \(x^{(i)} < y^{(j)}\) if the indicated occurrence of \(x\) appears to the left of the indicated occurrence of \(y\) in $w$. In this case, \([x^{(i)},y^{(j)}]_w\) denotes the factor of $w$ beginning with \(x^{(i)}\) and ending with \(y^{(j)}\), with both endpoints included. For a set $A$ of letters, $w|_A$ denotes the subword obtained from $w$ by deleting all letters not in $A$; the same notation is used for the restriction of a factor. Thus expressions such as $[x^{(i)},y^{(j)}]_w|_A$ are restrictions of factors, not set-theoretic intersections. In a uniform representing word, a letter $y$ alternates with $x$ precisely when exactly one occurrence of $y$ lies strictly between every two consecutive occurrences of $x$.
The following observation follows directly from the definitions.

\begin{Fact}~\label{evident}
If $(x,y)\in E$, then $x^{(i)}>y^{(i-1)}$ and $y^{(i)}>x^{(i-1)}$ for every $i\geq 2$ for which the indicated occurrences are defined.
\end{Fact}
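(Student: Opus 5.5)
The claim is that in a uniform word representing $G$, if $(x,y)\in E$ then $x^{(i)}>y^{(i-1)}$ and $y^{(i)}>x^{(i-1)}$ for every $i\ge 2$ for which these occurrences exist. My plan is to argue directly from the meaning of alternation and to prove the stronger fact that the occurrences of $x$ and $y$ interleave. By symmetry in $x$ and $y$, it is enough to prove the first inequality.

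Suppose $w$ is a $k$-uniform word representing $G$ and $(x,y)\in E$. Then $x$ and $y$ alternate in $w$, so $w|_{\{x,y\}}$ is either $xyxy\cdots xy$ or $yxyx\cdots yx$, each letter occurring $k$ times. Since the restriction to $\{x,y\}$ keeps all occurrences of $x$ and $y$ and preserves their relative order, I can read the order of the indexed occurrences straight off this alternating word.

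I then consider the two cases.

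In the first case, $w|_{\{x,y\}}=x^{(1)}y^{(1)}x^{(2)}y^{(2)}\cdots x^{(k)}y^{(k)}$. This gives $y^{(i-1)}<x^{(i)}<y^{(i)}$ for every $2\le i\le k$. In particular $x^{(i)}>y^{(i-1)}$, and also $y^{(i)}>x^{(i)}>x^{(i-1)}$.

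In the second case, $w|_{\{x,y\}}=y^{(1)}x^{(1)}\cdots y^{(k)}x^{(k)}$. This gives $x^{(i)}>y^{(i)}>y^{(i-1)}$ and $y^{(i)}>x^{(i-1)}$ directly.

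Both inequalities therefore hold in both cases. The phrase ``for which the indicated occurrences are defined'' only restricts $i$ to the range $2\le i\le k$.

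The one point that needs care is justifying that alternation of $x$ and $y$ forces the restriction to be one of these two perfectly interleaved words with matching indices. This holds because alternating means that no two equal letters are adjacent in $w|_{\{x,y\}}$. Together with uniformity (both letters occur $k$ times), this pins the word down to $(xy)^k$ or $(yx)^k$. This indexing is exactly what makes the $i$-th occurrence of one letter lie between the $(i-1)$-st and $(i+1)$-st occurrences of the other.
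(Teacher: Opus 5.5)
Your proof is correct and is exactly the direct unpacking of the definition that the paper intends; the paper gives no separate proof, saying only that the observation follows from the definitions. Alternation forces $w|_{\{x,y\}}$ to be $(xy)^k$ or $(yx)^k$, and both inequalities are then read off positionally (uniformity is not even needed, since any alternating restriction starting with $x$, resp.\ $y$, places the defined occurrences exactly as in your first, resp.\ second, case).
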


Next, we prove that $\mathrm{King}_{3,n}$ and 
$\mathrm{King}_{4,n}$ are 3-representable. Since $\mathrm{King}_{3,n}$ is an induced subgraph of $\mathrm{King}_{4,n}$, it is sufficient to provide a 3-representation of
$\mathrm{King}_{4,n}$ only.  Let
\begin{align*}
&\texttt{Od} = x_1x_2x_1 x_3 x_2 x_1 x_4 x_3 x_2 x_4 x_3 x_4\\
&= x_1^{(1)} x_2^{(1)} x_1^{(2)} x_3^{(1)} x_2^{(2)} x_1^{(3)} x_4^{(1)} x_3^{(2)} x_2^{(3)} x_4^{(2)} x_3^{(3)} x_4^{(3)},
\end{align*}
and
\begin{align*}
&\texttt{Ev} = x_2 x_1 x_3 x_2 x_1 x_4 x_3 x_2 x_4 x_3 x_4 x_1\\
& = x_2^{(1)} x_1^{(1)} x_3^{(1)} x_2^{(2)} x_1^{(2)} x_4^{(1)} x_3^{(2)} x_2^{(3)} x_4^{(2)} x_3^{(3)} x_4^{(3)} x_1^{(3)}.
\end{align*}
The occurrences are given for helping to follow the proofs below.
Both these words represent the path $P_4$ and $\mbox{\texttt{Ev}}$ is a shift of $\mbox{\texttt{Od}}$ by one position to the left. Note also that 
$\mbox{\texttt{Od}}$ contains the factors $x_1^{(2)}x_3^{(1)}x_2^{(2)}$ and $x_3^{(2)}x_2^{(3)}x_4^{(2)}$ while  $\mbox{\texttt{Ev}}$ contains the factors $x_2^{(1)}x_1^{(1)}x_3^{(1)}$ and $x_2^{(3)}x_4^{(2)}x_3^{(3)}$.

\begin{theorem}\label{King4}  The king  graph $\mathrm{King}_{4,n}$ is $3$-representable for all $n \geq 1$. Moreover, there exists a word $Y_n$ representing it such that:
\begin{itemize}
\item if $n$ is odd, then $Y_n$ contains the factors $x_{1,n}^{(2)}x_{3,n}^{(1)} x_{2,n}^{(2)}$ and $x_{3,n}^{(2)} x_{2,n}^{(3)} x_{4,n}^{(2)}$;
\item if $n$ is even, then $Y_n$ contains the factors $x_{2,n}^{(1)} x_{1,n}^{(1)} x_{3,n}^{(1)}$ and $x_{2,n}^{(3)} x_{4,n}^{(2)} x_{3,n}^{(3)}$.
\end{itemize}
\end{theorem}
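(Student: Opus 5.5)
Induction on $n$, with the layer-$n$ letters placed using the patterns \texttt{Od} and \texttt{Ev}. For $n=1$, take $Y_1$ to be \texttt{Od} with $x_i$ renamed $x_{i,1}$. This 3-represents the path $P_4=\mathrm{King}_{4,1}$ and contains the two required factors, as already noted before the statement.

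For the inductive step, assume $Y_{n-1}$ has the properties in the statement. The new column $n$ must satisfy the following adjacencies: $x_{i,n}$ is adjacent to $x_{i',n}$ for $|i-i'|=1$, adjacent to $x_{i',n-1}$ for $|i-i'|\le 1$, and non-adjacent to every vertex of the earlier columns. The plan is to obtain $Y_n$ from $Y_{n-1}$ by inserting the three copies of each $x_{i,n}$ locally, close to the three "active" factors of column $n-1$. The key observation is that in a 3-uniform word, a letter inserted immediately next to an occurrence of $x_{i',n-1}$ behaves almost like a copy of $x_{i',n-1}$ with respect to all other letters. So we should insert the new letters so that, restricted to columns $n-1$ and $n$, the word is a 3-representation of $\mathrm{King}_{4,2}$ with $x_{\cdot,n}$ following the pattern \texttt{Ev} when $n$ is even and \texttt{Od} when $n$ is odd. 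Since \texttt{Ev} is \texttt{Od} shifted cyclically by one position, the parity-dependent factors in the hypothesis are exactly the places where the shifted copies can be interleaved.

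Concretely, I would treat the case where $n-1$ is odd and $n$ is even; the other case is symmetric, with the roles of the two factor-triples swapped. Each $x_{i,n}^{(j)}$ is placed right after a suitable occurrence of $x_{i,n-1}$ or $x_{i\pm1,n-1}$, chosen so that $Y_n|_{\{x_{\cdot,n}\}}$ equals \texttt{Ev}. The factors $x_{1,n-1}^{(2)}x_{3,n-1}^{(1)}x_{2,n-1}^{(2)}$ and $x_{3,n-1}^{(2)}x_{2,n-1}^{(3)}x_{4,n-1}^{(2)}$ are where the crossing letters go, so that $x_{2,n}$ and $x_{3,n}$ get one copy between each consecutive pair of occurrences of their three neighbours in column $n-1$.

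Verification then splits into three parts:
\begin{itemize}
\item Within column $n$, the restriction is \texttt{Ev}, hence $P_4$.
\item Across columns $n-1$ and $n$, check each of the 16 pairs $(x_{i,n-1},x_{i',n})$ by listing $Y_n|_{\{x_{i,n-1},x_{i',n}\}}$.
\item Against columns $\le n-2$, non-adjacency follows because every $x_{\cdot,n}$ copy sits inside a short window whose restriction to any older letter $z$ contains no occurrence of $z$, while $z$ does not alternate with the column-$(n-1)$ letters bounding that window. The obvious worry is that a new letter might nevertheless alternate with some $z$ by accident; Observation~\ref{evident} is used to pin down where the older letters can be.
\end{itemize}
Finally, check that the new factors required for $n$ even appear, namely $x_{2,n}^{(1)}x_{1,n}^{(1)}x_{3,n}^{(1)}$ and $x_{2,n}^{(3)}x_{4,n}^{(2)}x_{3,n}^{(3)}$. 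This requires inserting these triples contiguously, which constrains the construction.

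The main obstacle is choosing the exact insertion positions so that all four conditions hold simultaneously: the inherited factor structure, the alternations across the two columns, the non-alternations with column $n-2$ and earlier, and the required contiguous triples for the next step. I would first try to find the positions by hand on $\mathrm{King}_{4,2}$ starting from \texttt{Od}, then read off the general rule. If contiguity fails, the fallback is to strengthen the inductive invariant to describe the full local pattern around the last column.
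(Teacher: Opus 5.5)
Your outline has the same architecture as the paper's proof: induction on $n$, the new layer inserted around the two distinguished factors of layer $n-1$, and the restriction to layer $r$ equal to \texttt{Od} or \texttt{Ev} according to parity. But it stops where the proof begins. You never say where the twelve new occurrences go, and you leave that as ``the main obstacle.'' The theorem is proved by construction, so without an insertion rule none of your three verification steps can be carried out, and neither can the reproduction of the two factors for the next step.

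The paper supplies this rule. For even $n$ it makes three replacements:
\begin{itemize}
\item $x_{1,n-1}^{(2)}x_{3,n-1}^{(1)}x_{2,n-1}^{(2)}$ becomes $x_{2,n}^{(1)}x_{1,n}^{(1)}x_{3,n}^{(1)}\,x_{1,n-1}^{(2)}x_{3,n-1}^{(1)}x_{2,n-1}^{(2)}\,x_{2,n}^{(2)}x_{1,n}^{(2)}$;
\item $x_{3,n-1}^{(2)}x_{2,n-1}^{(3)}x_{4,n-1}^{(2)}$ becomes $x_{4,n}^{(1)}x_{3,n}^{(2)}\,x_{3,n-1}^{(2)}x_{2,n-1}^{(3)}x_{4,n-1}^{(2)}\,x_{2,n}^{(3)}x_{4,n}^{(2)}x_{3,n}^{(3)}$;
\item $x_{4,n-1}^{(3)}$ becomes $x_{4,n-1}^{(3)}x_{4,n}^{(3)}x_{1,n}^{(3)}$.
\end{itemize}
For odd $n$ it uses substitutions of a different shape: around $x_{2,n-1}^{(1)}x_{1,n-1}^{(1)}x_{3,n-1}^{(1)}$, around the single letter $x_{4,n-1}^{(1)}$, and around $x_{2,n-1}^{(3)}x_{4,n-1}^{(2)}x_{3,n-1}^{(3)}$. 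So the odd case is not simply the even case with the two triples swapped; it needs its own verification.

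This construction also does not follow your heuristic that each new copy sits next to a neighbour in column $n-1$ and so mimics an old letter. For example, $x_{1,n}^{(3)}$ lands after $x_{4,n-1}^{(3)}x_{4,n}^{(3)}$. The non-edges $x_{1,n}x_{3,n-1}$ and $x_{1,n}x_{4,n-1}$ are certified by counting occurrences inside intervals of $x_{1,n}$:
\begin{itemize}
\item $[x_{1,n}^{(2)},x_{1,n}^{(3)}]$ contains two copies of $x_{3,n-1}$;
\item $[x_{1,n}^{(1)},x_{1,n}^{(2)}]$ contains no copy of $x_{4,n-1}$.
\end{itemize}

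Your argument for non-adjacency to columns $\le n-2$ is not valid as phrased. Knowing that each copy of a new letter $x$ sits in a window free of an older letter $z$ does not stop $x$ and $z$ from alternating, because the copies of $z$ can lie between the windows. What is needed, and what the paper uses, is this: for each new letter, some two \emph{consecutive} copies lie inside a single replacement string whose old letters all belong to column $n-1$. Then no copy of $z$ lies between them, so $z$ cannot alternate with $x$ in a uniform word. Observation~\ref{evident} plays no role there.

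Finally, build the statement ``$Y_{n-1}$ restricted to column $n-1$ is \texttt{Od}/\texttt{Ev}'' into the inductive hypothesis from the start rather than treating it as a fallback. The cross-layer checks use order relations among column-$(n-1)$ occurrences outside the two factors, for example that $x_{4,n-1}^{(3)}$ is the last of them.
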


\begin{proof}
We construct $Y_n$ recursively. Put $V_r=\{x_{1,r},x_{2,r},x_{3,r},x_{4,r}\}$ and write $W_r=Y_n|_{V_r}$ for the restriction to the $r^{\rm th}$ layer (this restriction is independent of later layers). Our induction invariant consists of the two factor conditions in the theorem together with the assertion that, after the second subscript is suppressed, $W_r$ is \texttt{Od} for odd $r$ and \texttt{Ev} for even $r$. In particular, all order relations among occurrences within $W_r$ that are used below are part of the invariant. For $n = 1$, we start with the word
\[
Y_1 = x_{1,1}^{(1)} x_{2,1}^{(1)} x_{1,1}^{(2)} x_{3,1}^{(1)} x_{2,1}^{(2)} x_{1,1}^{(3)} x_{4,1}^{(1)} x_{3,1}^{(2)} x_{2,1}^{(3)} x_{4,1}^{(2)} x_{3,1}^{(3)} x_{4,1}^{(3)},
\]
which coincides, after removing the second subindex, with $\mbox{\texttt{Od}}$ representing $P_4$, and contains the factors  
$x_{1,1}^{(2)} x_{3,1}^{(1)} x_{2,1}^{(2)}$ and $x_{3,1}^{(2)} x_{2,1}^{(3)} x_{4,1}^{(2)}$.

Given a 3-uniform word $Y_{n-1}$ satisfying the induction invariant, we obtain $Y_n$ as follows.
 If $n$ is even, substitute
\begin{eqnarray*}
\mbox{the factor } x_{1,n-1}^{(2)}x_{3,n-1}^{(1)}x_{2,n-1}^{(2)} & \mbox{ by } & x_{2,n}^{(1)}x_{1,n}^{(1)}x_{3,n}^{(1)} x_{1,n-1}^{(2)}x_{3,n-1}^{(1)}x_{2,n-1}^{(2)}x_{2,n}^{(2)}x_{1,n}^{(2)};\\
\mbox{the factor } x_{3,n-1}^{(2)}x_{2,n-1}^{(3)}x_{4,n-1}^{(2)} & \mbox{ by } & x_{4,n}^{(1)}x_{3,n}^{(2)}x_{3,n-1}^{(2)}x_{2,n-1}^{(3)}x_{4,n-1}^{(2)}x_{2,n}^{(3)}x_{4,n}^{(2)}x_{3,n}^{(3)}; \\
x_{4,n-1}^{(3)} & \mbox{ by } & x_{4,n-1}^{(3)}x_{4,n}^{(3)}x_{1,n}^{(3)}.
\end{eqnarray*}

If $n$ is odd, substitute
\begin{eqnarray*}
\mbox{the factor } x_{2,n-1}^{(1)}x_{1,n-1}^{(1)}x_{3,n-1}^{(1)} & \mbox{ by } & x_{1,n}^{(1)}x_{2,n}^{(1)}x_{2,n-1}^{(1)}x_{1,n-1}^{(1)}x_{3,n-1}^{(1)}x_{1,n}^{(2)}x_{3,n}^{(1)}x_{2,n}^{(2)};\\
x_{4,n-1}^{(1)} & \mbox{ by } & x_{1,n}^{(3)}x_{4,n}^{(1)}x_{4,n-1}^{(1)};\\
\mbox{the factor } x_{2,n-1}^{(3)}x_{4,n-1}^{(2)}x_{3,n-1}^{(3)} & \mbox{ by } & x_{3,n}^{(2)}x_{2,n}^{(3)}x_{4,n}^{(2)}x_{2,n-1}^{(3)}x_{4,n-1}^{(2)}x_{3,n-1}^{(3)}x_{3,n}^{(3)}x_{4,n}^{(3)}. 
\end{eqnarray*}

In each case, deleting the old letters from the three replacement strings leaves the twelve new occurrences in exactly the order \texttt{Od} when $n$ is odd and \texttt{Ev} when $n$ is even. The displayed replacements also contain the two factors required for the next step. Hence the induction invariant is preserved, $W_n$ represents $P_4$, and $Y_n$ is 3-uniform. For each new letter, one interval between two consecutive occurrences lies within a single replacement string and contains old letters only from $V_{n-1}$. Consequently, a letter from any earlier layer occurs zero times in that interval and cannot alternate with the new letter. It remains only to verify adjacencies between the new and preceding layers. Write $N_{n-1}(x)=N(x)\cap V_{n-1}$. We consider two cases. \\[-3mm]

\noindent
{\bf Case 1.} Suppose that $n$ is even. By the first replacement and the restriction notation introduced above,
\[
[x_{2,n}^{(1)},x_{2,n}^{(2)}]_{Y_n}|_{V_{n-1}}
=[x_{1,n}^{(1)},x_{1,n}^{(2)}]_{Y_n}|_{V_{n-1}}
=x_{1,n-1}^{(2)}x_{3,n-1}^{(1)}x_{2,n-1}^{(2)}.
\]
Moreover,
\[
x_{1,n-1}^{(3)}<x_{3,n-1}^{(2)}<x_{2,n-1}^{(3)}<x_{2,n}^{(3)}
<x_{3,n-1}^{(3)}<x_{1,n}^{(3)}.
\]
This follows from the structure of $W_{n-1}=\mbox{\texttt{Od}}$ and because the last replacement inserts $x_{1,n}^{(3)}$ after $x_{4,n-1}^{(3)}$. Therefore,
\begin{align*}
N_{n-1}(x_{1,n})&=\{x_{1,n-1},x_{2,n-1}\},\\
N_{n-1}(x_{2,n})&=\{x_{1,n-1},x_{2,n-1},x_{3,n-1}\}.
\end{align*}
Similarly,
\[
[x_{4,n}^{(1)},x_{4,n}^{(2)}]_{Y_n}|_{V_{n-1}}
=[x_{3,n}^{(2)},x_{3,n}^{(3)}]_{Y_n}|_{V_{n-1}}
=x_{3,n-1}^{(2)}x_{2,n-1}^{(3)}x_{4,n-1}^{(2)},
\]
and the contrapositive of Observation~\ref{evident} gives $(x_{2,n-1},x_{4,n})\not\in E$. Since
\[
x_{3,n-1}^{(3)}<x_{4,n-1}^{(3)}<x_{4,n}^{(3)}
\]
and
\[
x_{2,n-1}^{(1)}<x_{3,n}^{(1)}<x_{3,n-1}^{(1)}<x_{2,n-1}^{(2)}
<x_{4,n-1}^{(1)}<x_{3,n}^{(2)},
\]
we have
\begin{align*}
N_{n-1}(x_{3,n})&=\{x_{2,n-1},x_{3,n-1},x_{4,n-1}\},\\
N_{n-1}(x_{4,n})&=\{x_{3,n-1},x_{4,n-1}\},
\end{align*}
as required. \\[-3mm]

\noindent
\textbf{Case 2.} If $n \ge 3$ is odd, then $W_n$ contains the factor
\[
x_{1,n}^{(1)} x_{2,n}^{(1)} x_{2,n-1}^{(1)} x_{1,n-1} ^{(1)} x_{3,n-1} ^{(1)} x_{1,n} ^{(2)} x_{3,n} ^{(1)} x_{2,n} ^{(2)}.
\]
The induction invariant gives the relative order of the old occurrences in $W_{n-1}=\mbox{\texttt{Ev}}$; reading the positions of the inserted letters in the three replacement strings then gives
\[
x_{2,n-1} ^{(2)} < x_{1,n-1} ^{(2)} < x_{1,n} ^{(3)} < x_{3,n-1} ^{(2)} < x_{2,n} ^{(3)} < x_{j,n-1} ^{(3)} \quad \text{for all } j = 1,2,3.
\]
Thus,
\begin{align*}
N_{n-1}(x_{1,n})&=\{x_{1,n-1},x_{2,n-1}\},\\
N_{n-1}(x_{2,n})&=\{x_{1,n-1},x_{2,n-1},x_{3,n-1}\},
\end{align*}
as required.

Similarly, since $W_n$ contains the factor
\[
x_{3,n} ^{(2)} x_{2,n} ^{(3)} x_{4,n} ^{(2)} x_{2,n-1} ^{(3)} x_{4,n-1} ^{(2)} x_{3,n-1} ^{(3)} x_{3,n} ^{(3)} x_{4,n} ^{(3)}
\]
and also
\[
x_{2,n-1}^{(1)}<x_{3,n-1}^{(1)}<x_{3,n}^{(1)}<x_{2,n-1}^{(2)}
<x_{4,n}^{(1)}<x_{4,n-1}^{(1)}<x_{3,n-1}^{(2)},
\]
we have
\begin{align*}
N_{n-1}(x_{4,n})&=\{x_{3,n-1},x_{4,n-1}\},\\
N_{n-1}(x_{3,n})&=\{x_{2,n-1},x_{3,n-1},x_{4,n-1}\}.
\end{align*}
\end{proof}

\subsection{Queen graphs}\label{queen-sec} 

Recall that a \emph{comparability graph} is a graph that admits a transitive orientation, and the \emph{neighbourhood} of a vertex $v$ in a graph is the subgraph induced by the vertices adjacent to $v$. We need the following result.

\begin{theorem}[\cite{KitPya08}]\label{comparability}
If a graph $G$ is word-representable, then the neighbourhood of each vertex in $G$ is a comparability graph.
\end{theorem}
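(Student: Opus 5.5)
The plan is to read a transitive orientation of the neighbourhood directly off a uniform representing word. Suppose $G$ is word-representable. By the fact recalled in the introduction, $G$ is then $k$-representable for some $k$, so fix a $k$-uniform word $w$ representing $G$. Fix a vertex $v$, let $N=N(v)$, and consider the restriction $u=w|_{N\cup\{v\}}$. This restriction represents the subgraph induced by $N\cup\{v\}$, since alternation of two letters depends only on those two letters.

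I will use the standard fact (see \cite{KL15}) that a cyclic shift of a uniform word represents the same graph. By this fact we may assume that $u$ begins with $v$. Every $x\in N$ alternates with $v$, so exactly one occurrence of $x$ lies strictly between any two consecutive occurrences of $v$, and exactly one lies after the last $v$. Hence
\[
u = v A_1 v A_2 \cdots v A_k ,
\]
where each $A_i$ is a permutation of $N$.

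The key claim is that two distinct $x,y\in N$ are adjacent in $G$ if and only if they appear in the same relative order in all of $A_1,\dots,A_k$.
\begin{itemize}
\item If $x$ precedes $y$ in every $A_i$, then $u|_{\{x,y\}}=(xy)^k$, so $x$ and $y$ alternate.
\item If the order changes between some consecutive blocks $A_i$ and $A_{i+1}$, then $u|_{\{x,y\}}$ contains a factor $xyyx$ or $yxxy$, so $x$ and $y$ do not alternate and are non-adjacent.
\end{itemize}

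Orient each edge $xy$ of the neighbourhood as $x\to y$ when $x$ precedes $y$ in every $A_i$. By the claim this defines an orientation of exactly the edges of the subgraph induced by $N$. It is transitive, because $x\to y$ and $y\to z$ mean $x$ precedes $z$ in every block, so $xz$ is an edge oriented $x\to z$. Equivalently, the relation is the intersection of $k$ linear orders, hence a partial order whose comparability graph is the neighbourhood of $v$. This proves that the neighbourhood of $v$ is a comparability graph.

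The only delicate step is the reduction to the block form $vA_1\cdots vA_k$, which rests on the cyclic-shift invariance for uniform words. If I wanted to avoid citing it, I would instead compare occurrence indices directly. I would show that adjacency of $x,y\in N$ is equivalent to $x^{(i)}<y^{(i)}$ for all $i$ (or the reverse for all $i$), using Observation~\ref{evident} together with the alternation of both $x$ and $y$ with $v$ to rule out interleaving failures at the block boundaries. I expect this bookkeeping to be the main obstacle, while the transitivity argument itself is immediate.
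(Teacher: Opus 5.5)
Your proof is correct. The paper gives no proof of this theorem and only cites \cite{KitPya08}, and your argument is essentially the standard one from that source: make the representing word uniform, shift it cyclically so it begins with $v$, split the restriction to $N(v)\cup\{v\}$ into blocks $vA_1\cdots vA_k$, each $A_i$ a permutation of $N(v)$, and read off that $A_1\cdots A_k$ permutationally represents the neighbourhood. The only difference is at the last step: you check directly that the common order of the blocks is a transitive orientation, while the original appeals to the fact that permutationally representable graphs are exactly the comparability graphs.
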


The following theorem shows that the graph $\mathrm{Queen}_{m,n}$ is not word-representable for $m,n \geq 3$.

\begin{theorem}\label{thm-Q33}  We have $\mathcal{R}(\mathrm{Queen}_{m,n}) = \infty$ for $m,n \geq 3$. \end{theorem}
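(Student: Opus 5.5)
The plan is to reduce to the $3\times 3$ board and then invoke Theorem~\ref{comparability} at the centre vertex.

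\textbf{Reduction to $3\times 3$.} For $m,n\ge 3$, the squares $x_{i,j}$ with $1\le i,j\le 3$ induce a copy of $\mathrm{Queen}_{3,3}$ in $\mathrm{Queen}_{m,n}$. Two squares of this sub-board share a row, column or diagonal of the big board exactly when they do so within the sub-board, so the subgraph is induced. Word-representability is hereditary: deleting letters from a representing word represents the induced subgraph. Hence it suffices to show that $\mathrm{Queen}_{3,3}$ is not word-representable.

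\textbf{The neighbourhood of the centre.} By Theorem~\ref{comparability}, it is enough to show that the neighbourhood $H$ of $x_{2,2}$ in $\mathrm{Queen}_{3,3}$ (Figure~\ref{queen-neighborhood}) is not a comparability graph. The graph $H$ has the eight boundary squares as vertices. Its complement is sparse:
\begin{itemize}
\item each corner is non-adjacent only to the two edge-midpoints not in its row or column (for example $x_{1,1}\not\sim x_{2,3},x_{3,2}$);
\item each edge-midpoint is non-adjacent only to the two far corners (for example $x_{1,2}\not\sim x_{3,1},x_{3,3}$).
\end{itemize}
So $\overline{H}$ is the $8$-cycle
\[
x_{1,1}\,x_{2,3}\,x_{3,1}\,x_{1,2}\,x_{3,3}\,x_{2,1}\,x_{1,3}\,x_{3,2}\,x_{1,1}.
\]
Since this cycle is even, there is no quick odd-hole argument in the complement. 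I will therefore use the forcing (implication class) argument for transitive orientations directly.

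\textbf{Forcing argument.} The forcing rule is: if $a\to b$ and $bc\in E(H)$ with $ac\notin E(H)$, then $c\to b$; symmetrically for the tail. By the symmetry of the square, I may assume $x_{1,1}\to x_{1,2}$.
\begin{itemize}
\item Since $x_{1,1}\not\sim x_{3,2}$ and $x_{1,2}\sim x_{3,2}$, this forces $x_{3,2}\to x_{1,2}$.
\item Since $x_{1,2}\not\sim x_{3,3}$ and $x_{3,3}\sim x_{3,2}$, this in turn forces $x_{3,2}\to x_{3,3}$.
\end{itemize}
I then keep propagating along the complementary non-edges, using the rotational symmetry of the configuration, until some edge, for example $x_{1,1}x_{1,2}$ itself or a corner--corner edge such as $x_{1,1}x_{3,3}$, receives both orientations. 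A forced contradiction shows that the implication class of $x_{1,1}x_{1,2}$ contains an edge together with its reverse, so $H$ admits no transitive orientation.

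\textbf{Main obstacle.} The hard step is organising this chain of forcings cleanly. The $8$-cycle structure of $\overline{H}$ suggests the forcing travels around the board, rotating by $90^\circ$ each time, and returns to $x_{1,1}x_{1,2}$ reversed. My first attempt is to follow that chain explicitly.

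If the chain instead closes consistently, I will switch to a different certificate. The approach there is to exhibit a small induced subgraph of $H$ from Gallai's list of minimal non-comparability graphs, for instance built from a corner--corner diagonal together with the edge-midpoints. Finding such a certificate, rather than the reduction steps, is where I expect the difficulty.
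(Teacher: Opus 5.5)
\textbf{There is a genuine gap: the step that carries the whole proof is never carried out.}

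Your setup is correct and follows the paper's route. The $3\times 3$ corner is an induced $\mathrm{Queen}_{3,3}$, and Theorem~\ref{comparability} reduces the problem to the neighbourhood $H$ of $x_{2,2}$. Your computation $\overline{H}=C_8$ is right, and your first two forced arcs $x_{3,2}\to x_{1,2}$ and $x_{3,2}\to x_{3,3}$ are valid.

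The proof then stops where its content begins. ``Keep propagating until some edge receives both orientations'' only restates what must be shown. Your contingency (``if the chain instead closes consistently'') concedes that you have not shown it. As written, nothing excludes a transitive orientation of $H$.

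Your fallback would also fail in the form you describe. For every proper induced subgraph of $H$, the complement is a proper induced subgraph of $C_8$, i.e.\ a disjoint union of paths. That is an interval graph, so the subgraph itself is a comparability graph. Hence $H$ is already a minimal non-comparability graph, and no smaller certificate exists inside it. For instance, the two corners of a diagonal together with the four edge-midpoints have complement $2P_3$. The only Gallai-type certificate is $H=\overline{C_8}$ itself, a member of the family $\overline{C_n}$, $n\ge 6$. Equivalently, $C_8$ contains an asteroidal triple and so is not a cocomparability graph.

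The missing step is short. Start from $x_{1,1}\to x_{1,2}$ and $x_{3,2}\to x_{1,2}$.
Since $x_{1,3}\not\sim x_{3,2}$, you get $x_{1,3}\to x_{1,2}$.
Since $x_{1,1}\not\sim x_{2,3}$, you get $x_{2,3}\to x_{1,2}$.
Since $x_{1,3}\not\sim x_{2,1}$, you get $x_{2,1}\to x_{1,2}$.
Since $x_{1,2}$ is non-adjacent to $x_{3,3}$ and $x_{3,1}$, the tail version of your rule gives $x_{2,3}\to x_{3,3}$ and $x_{2,1}\to x_{3,1}$.
Now the edge $x_{3,1}x_{3,3}$ cannot be oriented. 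The orientation $x_{3,1}\to x_{3,3}$ would require $x_{2,1}\sim x_{3,3}$, and $x_{3,3}\to x_{3,1}$ would require $x_{2,3}\sim x_{3,1}$. This is exactly the paper's argument.

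Your own chain also closes. From $x_{3,2}\to x_{3,3}$ you are forced in turn to $x_{3,2}\to x_{2,1}$, $x_{1,1}\to x_{2,1}$, $x_{1,1}\to x_{1,3}$, $x_{2,3}\to x_{1,3}$ and $x_{2,3}\to x_{3,2}$. Meanwhile $x_{1,1}\to x_{1,2}$ forces $x_{1,1}\to x_{3,1}$, then $x_{3,2}\to x_{3,1}$, then $x_{3,2}\to x_{2,3}$, which contradicts $x_{2,3}\to x_{3,2}$. Some such explicit chain must be written out for the proposal to be a proof.
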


\begin{proof} 
We first show that the neighbourhood of the vertex $x_{2,2}$ in $\mathrm{Queen}_{3,3}$ (depicted in Figure~\ref{queen-neighborhood}) is not a comparability graph (that is, it does not admit a transitive orientation). Indeed, w.l.o.g., assume that the edge $(x_{1,1},x_{1,2})$ is oriented as $x_{1,1}\!\to\! x_{1,2}$. To satisfy transitivity, the following edge orientations are then forced (justifications are given in parentheses):
\begin{itemize} 
\item[] $x_{3,2}\rightarrow x_{1,2}$ (as $x_{1,1}$ and $x_{3,2}$ are not adjacent), 

$x_{1,3}\rightarrow x_{1,2}$ (as $x_{1,3}$ and $x_{3,2}$ are not adjacent), 

$x_{2,3}\rightarrow x_{1,2}$ (as $x_{1,1}$ and $x_{2,3}$ are not adjacent), 

$x_{2,1}\rightarrow x_{1,2}$ (as $x_{1,3}$ and $x_{2,1}$ are not adjacent), 

$x_{2,3}\rightarrow x_{3,3}$ (as $x_{1,2}$ and $x_{3,3}$ are not adjacent),

 $x_{2,1}\rightarrow x_{3,1}$ (as $x_{1,2}$ and $x_{3,1}$ are not adjacent).
\end{itemize}

But then there is no good way to orient the edge $(x_{3,1},x_{3,3})$. Indeed, 
\begin{itemize} 
\item[]
if  $x_{3,1}\rightarrow x_{3,3}$ then $x_{2,1}\rightarrow x_{3,1}\rightarrow x_{3,3}$ while  $x_{2,1}$ and  $x_{3,3}$ are not connected;

if $x_{3,3}\rightarrow x_{3,1}$ then $x_{2,3}\rightarrow x_{3,3}\rightarrow x_{3,1}$ while  $x_{2,3}$ and  $x_{3,1}$ are not connected.  
\end{itemize}

Since $\mathrm{Queen}_{3,3}$  contains a vertex whose neighbourhood is not a comparability graph, $\mathrm{Queen}_{3,3}$ is non-word-representable by Theorem~\ref{comparability} and therefore, $\mathrm{Queen}_{m,n}$ is not word-representable for $m,n\geq 3$ (that is, $\mathcal{R}(\mathrm{Queen}_{m,n})=\infty$ in this case). 
\end{proof}

Since  $\mathcal{R}(\mathrm{Queen}_{1,n})=\mathcal{R}(\mathrm{Queen}_{2,2})=1$ for $n\geq 1$, by Theorem~\ref{thm-Q33}, the only remaining case is determining  $\mathcal{R}(\mathrm{Queen}_{2,n})$ for $n\geq 3$. 

Note that $\mathrm{Queen}_{2,5}$ is not 2-representable, since columns $1$, $3$, and $5$ induce a triangular prism, which is not a circle graph \cite{Kit13}. Also, note that $\mathrm{Queen}_{2,3}$ is the complete graph $K_6$ without two edges, $(x_{1,1},x_{2,3})$ and $(x_{1,3},x_{2,1})$, and thus it can be 2-represented by 
$$x_{1,1}x_{2,3}x_{1,3}x_{2,1}x_{1,2}x_{2,2}x_{2,3}x_{1,1}x_{2,1}x_{1,3}x_{1,2}x_{2,2}.$$
Finally, a  2-representation of $\mathrm{Queen}_{2,4}$ is given by
$$x_{1,1}x_{1,2}x_{2,1}x_{1,3}x_{2,2}x_{1,4}x_{1,1}x_{2,3}x_{1,2}x_{2,4}x_{1,3}x_{1,4}x_{2,1}x_{2,2}x_{2,3}x_{2,4}.$$

\begin{theorem}
The queen graph  $\mathrm{Queen}_{2,n}$ is $3$-representable for all $n\ge 5$.
\end{theorem}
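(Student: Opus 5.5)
The plan is to write down an explicit $3$-uniform word $w_n$ and check alternations pair by pair, instead of arguing structurally. Write $t_j=x_{1,j}$ and $b_j=x_{2,j}$. In $\mathrm{Queen}_{2,n}$ each row is a clique $K_n$, and $t_j$ is adjacent to $b_k$ if and only if $|j-k|\le 1$. Hence the conditions on $w_n$ are:
\begin{itemize}
\item[(i)] $w_n|_{\{t_1,\dots,t_n\}}=p\,p\,p$ and $w_n|_{\{b_1,\dots,b_n\}}=q\,q\,q$ for permutations $p,q$ (up to a cyclic shift), so that each row is a clique;
\item[(ii)] $t_j$ and $b_k$ alternate if and only if $|j-k|\le1$.
\end{itemize}
Since $\mathrm{Queen}_{2,n}$ contains the triangular prism induced by columns $1,3,5$, which is not a circle graph \cite{Kit13}, $\mathcal{R}(\mathrm{Queen}_{2,n})\ge 3$ for $n\ge5$. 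So the theorem exactly needs a $3$-representation, and combined with this lower bound it gives $\mathcal{R}=3$.

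For the skeleton I will take the top word $T=(t_1\cdots t_n)^3$, viewed cyclically as three blocks. Each $b_k$ cuts this cyclic word into three gaps, and $b_k$ alternates with $t_j$ exactly when each gap contains one copy of $t_j$. The natural placement is as follows:
\begin{itemize}
\item $b_k^{(1)}$ immediately before $t_{k-1}$ in block~1;
\item $b_k^{(2)}$ immediately after $t_{k+1}$ in block~1;
\item $b_k^{(3)}$ after $t_{k+1}$ in block~2 and before $t_{k-1}$ in block~3.
\end{itemize}
With this placement, the first gap contains exactly $t_{k-1},t_k,t_{k+1}$ and nothing else. Therefore $b_k$ fails to alternate with every other $t_j$, and it alternates with the three required ones as long as the other two gaps each contain one copy of these three letters. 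At the ends ($k=1$ or $k=n$) the window simply shrinks to two letters.

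The obstacle is condition (i) for the bottom row. With the naive placement, $b_k^{(2)}$ lands after $b_{k+3}^{(1)}$ once $n\ge5$, and then $b_k$ and $b_{k+3}$ fail to alternate. Only pairs $b_k,b_{k+d}$ with $d\ge3$ cause this, which matches the fact that $n\le4$ admits a $2$-representation. My first fix is to reverse the roles of the copies for one parity, or to use a staircase ordering: use $T$'s blocks in different orders, for example the top restriction $p\,p\,p$ with the bottom occurrences spread as in the $\texttt{Od}/\texttt{Ev}$ shift of Theorem~\ref{King4}. The aim is that the second bottom occurrences appear in the same order $q$ as the first ones.

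Failing a closed form, I will proceed recursively, as in Theorem~\ref{King4}. Start from a $3$-representation of $\mathrm{Queen}_{2,5}$ found by hand or by computer, with invariant factors around the last column. Then obtain $w_{n+1}$ from $w_n$ by inserting the six occurrences of $t_{n+1},b_{n+1}$:
\begin{itemize}
\item one copy of each next to every copy of $t_n$ and of $b_n$, respectively, so that clique alternation is inherited;
\item then nudge single copies across a neighbouring occurrence of $b_{n-1}$ or $b_n$, or of $t_{n-1}$ or $t_n$, to create the cross edges $t_{n+1}b_n$ and $b_{n+1}t_n$, and to destroy $t_{n+1}b_{n-1}$ and $b_{n+1}t_{n-1}$.
\end{itemize}
For the verification I will use Observation~\ref{evident} and its contrapositive, and the fact that a new letter adjacent to an existing letter $v$ inherits $v$'s alternations. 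The main work, which I expect to be hardest, is choosing the nudges so that the invariant factors reappear one column to the right.
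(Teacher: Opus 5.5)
Your proposal does not prove the statement. The theorem is purely constructive, and you never exhibit a $3$-uniform word.

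\textbf{The concrete placement fails, and so does its design principle.} The one placement you give fails, as you note. (Incidentally, the defect is that $b_k^{(2)}$ \emph{precedes} $b_{k+d}^{(1)}$. With your literal tie-breaking this already happens for $d=3$, i.e.\ for $n=4$, so the link you draw to $2$-representability for $n\le 4$ is spurious.) Worse, the idea behind the placement cannot be rescued by changing which block carries each window or which copies bound it.

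Suppose every $b_k$ has a gap whose top letters are exactly $t_{k-1},t_k,t_{k+1}$, consecutive in some block $C_k$.
\begin{itemize}
\item For $5\le j\le n-4$, the two copies of $b_j$ bounding its window lie between $t_{j-2}$ and $t_{j-1}$ and between $t_{j+1}$ and $t_{j+2}$ in $C_j$.
\item So they lie outside the window gaps of $b_1$ and $b_n$, which sit near the start of $C_1$ and the end of $C_n$.
\item Since $b_j$ alternates with $b_1$ and $b_n$, its third copy must lie in both window gaps. Hence $C_1=C_n+1$, and that copy lies in the boundary slot between $C_n$ and $C_1$.
\item That third copy must also lie after $t_{j+1}$ in block $C_j+1$ and before $t_{j-1}$ in block $C_j+2$. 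This forces $C_j=C_n-1$ for all such $j$.
\item Then $b_j$ and $b_{j+4}$ occur cyclically as $b_jb_jb_{j+4}b_{j+4}\cdots$, so they do not alternate as soon as $n\ge 13$.
\end{itemize}
Your recursive fallback has no base word, no invariant and no insertion rule. It is a plan rather than an argument.

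\textbf{The missing idea.} The paper does not try to isolate the window $\{j-1,j,j+1\}$ in one gap. Instead it splits $|i-j|\le 1$ into the two one-sided conditions $i\le j+1$ and $i\ge j-1$. These are carried by two different gaps, and a complete copy of the top row sits in the third gap.

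Concretely, with top restriction $(t_1\cdots t_n)^3$, the paper builds its word as follows:
\begin{itemize}
\item in the first block, insert $b_j$ immediately after $t_{j+1}$, with $b_n$ at the block's end;
\item leave the second block untouched;
\item write $b_1b_2$ before the third block, and insert $b_{j+2}$ immediately after $t_j$ in it;
\item finally append $b_1\cdots b_n$.
\end{itemize}

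Now look at the three gaps of $b_j$:
\begin{itemize}
\item the gap wrapping around the end contains, from the top row, exactly $t_1,\dots,t_{\min(j+1,n)}$;
\item the gap between $b_j^{(2)}$ and $b_j^{(3)}$ contains exactly $t_{\max(j-1,1)},\dots,t_n$;
\item the remaining gap contains the whole second block together with the leftovers.
\end{itemize}
Hence $t_i$ and $b_j$ alternate if and only if $j-1\le i\le j+1$.

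Because the first, second and third copies of the bottom letters sit in three disjoint zones, each in increasing column order, the bottom restriction is automatically $(b_1\cdots b_n)^3$. That is exactly the property your placement breaks.
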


\begin{proof}
Products below denote concatenation in increasing order of the index. Consider the word
\begin{align*}
Q_n={}&x_{1,1}
\left(\prod_{j=1}^{n-1}x_{1,j+1}x_{2,j}\right)x_{2,n}
\left(\prod_{j=1}^{n}x_{1,j}\right)x_{2,1}x_{2,2}\\
&\quad\cdot\left(\prod_{j=1}^{n-2}x_{1,j}x_{2,j+2}\right)
x_{1,n-1}x_{1,n}\left(\prod_{j=1}^{n}x_{2,j}\right).
\end{align*}
This formula makes the endpoints of every factor explicit. Each letter occurs three times. Moreover, the three occurrences of the letters in each row appear in increasing column order, so each row induces a clique $K_n$.

For all $i,j$, the middle occurrence of $x_{1,i}$ lies after the first occurrence and before the second occurrence of $x_{2,j}$, while the third occurrence of $x_{1,i}$ lies before the third occurrence of $x_{2,j}$. If $|i-j|\leq1$, the remaining two comparisons have the required direction, and hence
\[
x_{1,i}^{(1)}<x_{2,j}^{(1)}<x_{1,i}^{(2)}<x_{2,j}^{(2)}<x_{1,i}^{(3)}<x_{2,j}^{(3)}.
\]
Thus $x_{1,i}$ and $x_{2,j}$ alternate. If $i>j+1$, then $x_{2,j}^{(1)}<x_{1,i}^{(1)}<x_{1,i}^{(2)}$, so they do not alternate. If $i<j-1$, then $x_{1,i}^{(2)}<x_{1,i}^{(3)}<x_{2,j}^{(2)}$, and again they do not alternate. These are exactly the cross-row adjacencies of $\mathrm{Queen}_{2,n}$.
\end{proof}

\subsection{Rook  graphs}\label{rook-sec} 

We have that for any $m, n \geq 1$, $\mathcal{R}(\mathrm{Rook}_{m,n}) < \infty$, since any rook  graph is the Cartesian product of two complete graphs, which are 1-representable, and the Cartesian product of two word-representable graphs is always word-representable~\cite{KL15}.

Note that $\mathcal{R}(\mathrm{Rook}_{1,n}) = 1$ for $n \geq 1$, and $\mathcal{R}(\mathrm{Rook}_{2,2}) = \mathcal{R}(C_4) = 2$, where $C_4$ is the 4-cycle.
 Further note that $\mathrm{Rook}_{2,3}$ is not 2-representable, since it is a triangular prism, which is not a circle graph~\cite{Kit13}. Hence, $\mathrm{Rook}_{m,n}$ is not 2-representable for $m \geq 2$ and $n \geq 3$. 

Finally, we note that $\mathrm{Rook}_{3,3} = \mathrm{TGr}_{3,3}$, and~\cite{AKP2025} provides a 3-representation of it:
 $$abcdefgadhigbcaehbfdeighcfi,$$ where, for brevity, we rename the vertices $x_{1,1}$, $x_{2,1}$, $x_{3,1}$, $x_{1,2}$, $x_{2,2}$, $x_{3,2}$, $\ldots$ as $a$, $b$, $c$, $d$, $e$, $f$, $\ldots$, respectively.

\begin{theorem}
The rook  graph  $\mathrm{Rook}_{2,n}$ is $3$-representable for all $n\ge 3$.
\end{theorem}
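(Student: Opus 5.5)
The plan is to write down an explicit $3$-uniform word $R_n$, modelled on the word $Q_n$ used for $\mathrm{Queen}_{2,n}$, and then check the alternation pattern of every pair of letters by comparing positions of occurrences. The checks for the $2n$ letters split into two kinds.

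\textbf{Row cliques.} In a $3$-uniform word, a set of letters is pairwise alternating exactly when its restriction has the form $ppp$ for a permutation $p$. So I will build $R_n$ from three ``rounds''. In each round every letter appears once, and within each row the letters appear in a fixed order $p_1$ (row~1) or $p_2$ (row~2). Then $R_n|_{\{x_{1,1},\ldots,x_{1,n}\}}=p_1p_1p_1$ and similarly for row~2. This makes both rows cliques $K_n$ automatically, as in the proof for $\mathrm{Queen}_{2,n}$.

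\textbf{Cross-row pairs.} The rest of the work is to choose how the two rows interleave inside each round. The goal is that $x_{1,i}$ and $x_{2,j}$ alternate if and only if $i=j$. The cue comes from $Q_n$: there, the interleavings $x_{1,j+1}x_{2,j}$ and $x_{1,j}x_{2,j+2}$ produce the adjacency window $|i-j|\le 1$. For the rook graph this window must shrink to $i=j$.

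The first idea I tried is rounds with $p_1=p_2=$ increasing order: round~1 interleaved, round~2 equal to $\prod_j x_{1,j}$ followed by $\prod_j x_{2,j}$, round~3 interleaved. Here round~1 contributes the condition $i\le j$. Round~2 is always compatible with the pattern $x_{1}<x_{2}<x_{1}<x_{2}<x_{1}<x_{2}$. Round~3 should therefore contribute $i\ge j$. With both rows increasing in round~3, that is impossible.

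I will fix this in one of two ways:
\begin{itemize}
\item take $p_2$ to be the reverse of $p_1$;
\item exploit that a cyclic shift of a representing word still represents the same graph, so the first row-$2$ occurrence of a letter may sit in a different ``round'' from its row-$1$ partner, in the same way that \texttt{Ev} is a shift of \texttt{Od} in the king construction.
\end{itemize}
Either way, one cross-row comparison should force $i\le j$ and another should force $i\ge j$. Then alternation $x_{1,i}^{(1)}<x_{2,j}^{(1)}<x_{1,i}^{(2)}<x_{2,j}^{(2)}<x_{1,i}^{(3)}<x_{2,j}^{(3)}$ holds exactly when $i=j$. For $i\neq j$ I will exhibit a specific violated inequality, as in the $\mathrm{Queen}_{2,n}$ proof. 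The given word for $\mathrm{Rook}_{3,3}$ and the $3$-representations of prisms are a sanity check for $n=3$ (note $\mathrm{Rook}_{2,3}=\mathrm{Pr}_3$).

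The main obstacle is this interleaving design. Once a word is fixed, verification is a routine position comparison, split into the cases $i<j$, $i=j$, $i>j$.

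If a direct formula resists, the fallback is induction on $n$, in the style of the proof of Theorem~\ref{King4}. Insert the new column $\{x_{1,n},x_{2,n}\}$ into a representation of $\mathrm{Rook}_{2,n-1}$ as follows:
\begin{itemize}
\item place each occurrence of $x_{1,n}$ immediately after the corresponding occurrence of $x_{1,n-1}$, so that it joins the row-$1$ clique;
\item place each occurrence of $x_{2,n}$ immediately after the corresponding occurrence of $x_{2,n-1}$, so that it joins the row-$2$ clique;
\item perturb locally so that $x_{1,n}$ and $x_{2,n}$ alternate with each other, while they stop alternating with $x_{2,n-1}$ and $x_{1,n-1}$, respectively.
\end{itemize}
The induction invariant would record a few fixed factors, analogous to those in Theorem~\ref{King4}.
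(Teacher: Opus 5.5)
There is a genuine gap. The proposal never writes down a word, so the whole content of the theorem, the interleaving design you yourself call the main obstacle, is left undone. Worse, the framework you commit to cannot be completed.

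Suppose $R_n$ consists of three rounds, each containing every letter once, with row orders $p_1,p_2$ fixed across rounds. Then $x_{1,i}$ and $x_{2,j}$ alternate if and only if their relative order is the same in all three rounds. Take a column $i$.

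If $x_{1,i}$ precedes $x_{2,i}$ in every round, then any $x_{1,i'}$ preceding $x_{1,i}$ in $p_1$ also precedes $x_{2,i}$ in every round. That pair would then alternate, which is forbidden. So $x_{1,i}$ must be the first letter of $p_1$.

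Symmetrically, if $x_{2,i}$ precedes $x_{1,i}$ in every round, then $x_{1,i}$ must be the last letter of $p_1$. Hence there are at most two columns, i.e.\ $n\le 2$.

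So your first fix (taking $p_2$ to be the reverse of $p_1$) fails for every $n\ge 3$. The same holds for every other choice of $p_1,p_2$ within strict rounds. Your diagnosis of the increasing/increasing case was correct, but the obstruction is more general than you realized.

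Only your second option survives, namely offsetting row~2's rounds from row~1's. This is the idea behind the paper's proof, but it has to be made concrete. The paper takes
\[
R_n=\Bigl(\prod_{i=1}^{n}x_{1,i}x_{2,i}\Bigr)\Bigl(\prod_{i=1}^{n}x_{1,i}\Bigr)\Bigl(\prod_{i=1}^{n}x_{2,i}x_{1,i}\Bigr)\Bigl(\prod_{i=1}^{n}x_{2,i}\Bigr),
\]
in which the second round of row~2 shares a block with the third round of row~1. The checks are then as follows:
\begin{itemize}
\item Both rows stay in increasing order, so each row is a clique.
\item The first block gives $x_{1,i}^{(1)}<x_{2,j}^{(1)}$ iff $i\le j$.
\item The third block gives $x_{2,j}^{(2)}<x_{1,i}^{(3)}$ iff $j\le i$.
\item The remaining comparisons $x_{2,j}^{(1)}<x_{1,i}^{(2)}<x_{2,j}^{(2)}$ and $x_{1,i}^{(3)}<x_{2,j}^{(3)}$ hold by block order.
\end{itemize}
This realizes your ``one comparison forces $i\le j$, another forces $i\ge j$''. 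It also rules out alternation starting with $x_{2,j}$, since $x_{1,i}^{(2)}<x_{2,j}^{(2)}$ always.

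Your inductive fallback can be made precise the same way, by appending the new column at the ends of these four blocks. As written, however, ``perturb locally'' specifies nothing that can be checked.
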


\begin{proof}
Products denote concatenation in increasing order. Consider the 3-uniform word
\[
R_n=\left(\prod_{i=1}^{n}x_{1,i}x_{2,i}\right)
\left(\prod_{i=1}^{n}x_{1,i}\right)
\left(\prod_{i=1}^{n}x_{2,i}x_{1,i}\right)
\left(\prod_{i=1}^{n}x_{2,i}\right).
\]
In each row, all three copies occur in increasing column order, so each row induces a clique $K_n$. If $i=j$, then
\[
x_{1,i}^{(1)}<x_{2,i}^{(1)}<x_{1,i}^{(2)}<x_{2,i}^{(2)}<x_{1,i}^{(3)}<x_{2,i}^{(3)},
\]
and hence $x_{1,i}$ alternates with $x_{2,i}$. If $i>j$, then
$x_{2,j}^{(1)}<x_{1,i}^{(1)}<x_{1,i}^{(2)}$, whereas if $i<j$, then
$x_{1,i}^{(2)}<x_{1,i}^{(3)}<x_{2,j}^{(2)}$. Thus cross-row letters in different columns do not alternate. These are exactly the adjacencies of $\mathrm{Rook}_{2,n}$.
\end{proof}

Finally, a computer search found no 3-uniform word representing $\mathrm{Rook}_{3,4}$, suggesting that $\mathcal{R}(\mathrm{Rook}_{3,4})\geq4$ and hence that $\mathcal{R}(\mathrm{Rook}_{3,m})\geq4$ for $m\geq4$. We do not regard this computation as a proof; establishing or refuting the suggested lower bound remains open.
 
\subsection{Bishop  graphs}\label{bishop-sec} 

Every bishop graph is the disjoint union of the subgraphs induced by the white and black squares. These two components need not contain one another as induced subgraphs; for example, the two components of $\mathrm{Bishop}_{3,3}$ are non-isomorphic. We therefore treat the components separately when needed. In particular, concatenating 2-representations of the two components gives a 2-representation of their disjoint union.

Now, $\mathcal{R}(\mathrm{Bishop}_{1,1}) = 1$ and $\mathcal{R}(\mathrm{Bishop}_{1,n}) = 2$ for $n \geq 2$, since $\mathrm{Bishop}_{1,n}$ is an independent set, which
is clearly 2-representable.
%can be represented by the word $x_{1,1}x_{1,1}x_{1,2}x_{1,2} \ldots x_{1,n}x_{1,n}$. 
Also, $\mathcal{R}(\mathrm{Bishop}_{2,n}) = 2$ for $n \geq 2$, since $\mathrm{Bishop}_{2,n}$ is a union of two disjoint paths $P_n$ and can be represented as the concatenation of the two 2-uniform words representing paths (see~\cite{KL15} for a method to represent trees).
%:
%\[
%x_{1,1}x_{2,2}x_{1,1}x_{1,3}x_{2,2}x_{2,4}x_{1,3}x_{1,5}x_{2,4} \ldots,
%\]
%\[
%x_{2,1}x_{1,2}x_{2,1}x_{2,3}x_{1,2}x_{1,4}x_{2,3}x_{2,5}x_{1,4} \ldots.
%\]

Furthermore, observe that a bishop  graph is structurally similar to a rook  graph and can be viewed as a disjoint union of rook  graphs corresponding to chess played on a “rhombus-shaped” chessboard rather than a rectangular one. This observation shows that $\mathcal{R}(\mathrm{Bishop}_{m,n}) < \infty$, since $\mathrm{Bishop}_{m,n}$ is a disjoint union of two subgraphs of rook  graphs, which are word-representable, and the following more specific result holds:

\begin{proposition}\label{bishop-bounds}
For $2\leq m \leq n$, we have
\begin{small}
\[
\max\left\{
\mathcal{R}\left(\mathrm{Rook}_{\left\lfloor \frac{m}{2} \right\rfloor,\; \left\lceil \frac{m}{2} \right\rceil + 1}\right),
\mathcal{R}\left(\mathrm{Rook}_{\left\lfloor \frac{m+1}{2} \right\rfloor,\; \left\lfloor \frac{m+1}{2} \right\rfloor}\right)
\right\}
\leq
\mathcal{R}(\mathrm{Bishop}_{m,n})
\]
and
\[
\mathcal{R}(\mathrm{Bishop}_{m,n})
\leq
\mathcal{R}(\mathrm{Rook}_{n,n}).
\]
\end{small}
\end{proposition}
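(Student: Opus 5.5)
The plan is to work in rotated coordinates. Two squares $x_{i,j}$ and $x_{i',j'}$ are adjacent in $\mathrm{Bishop}_{m,n}$ exactly when $i+j=i'+j'$ or $i-j=i'-j'$. So the map $x_{i,j}\mapsto (i+j,\,i-j)$ sends each colour class to a set of cells of a rook board: rows are indexed by the values of $i+j$ and columns by the values of $i-j$, and adjacency is preserved in both directions. Both bounds will then follow from one standard fact, which I will use throughout: if $H$ is an induced subgraph of $G$, then $\mathcal{R}(H)\le\mathcal{R}(G)$, since deleting the letters of $V(G)\setminus V(H)$ from a $k$-uniform representation of $G$ gives a $k$-uniform representation of $H$.

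\textbf{Upper bound.} Fix a colour class, so the parity of $i+j$ is fixed. Then $i+j$ takes values in $\{2,\dots,m+n\}$ with one fixed parity, which gives at most $\lceil (m+n-1)/2\rceil\le n$ values. The same holds for $i-j\in\{1-n,\dots,m-1\}$, and here I use $m\le n$. Hence each colour class is an induced subgraph of $\mathrm{Rook}_{n,n}$, and its representation number is at most $k=\mathcal{R}(\mathrm{Rook}_{n,n})$. Since $n\ge2$, the graph $\mathrm{Rook}_{n,n}$ is not complete, so $k\ge 2$. A $j$-uniform representation with $2\le j\le k$ can be padded to a $k$-uniform one by repeating any letter pattern in a way that keeps all alternations; the cleanest version is the standard fact from \cite{KL15} that $j$-representability implies $k$-representability for $k\ge j$. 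Now concatenate the $k$-uniform words of the two components. A letter $x$ from the first component and a letter $y$ from the second give the restriction $x^k y^k$ with $k\ge2$, so they do not alternate. This yields $\mathcal{R}(\mathrm{Bishop}_{m,n})\le\mathcal{R}(\mathrm{Rook}_{n,n})$.

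\textbf{Lower bound.} I will exhibit, for positive integers $a,b$ with $a+b-1\le m$, an induced copy of $\mathrm{Rook}_{a,b}$ inside one colour class. For $0\le s<a$ and $0\le t<b$, take the square
\[
(i,j)=(1+s+t,\; b+s-t).
\]
Then $i$ ranges over $1,\dots,a+b-1\le m$ and $j$ ranges over $1,\dots,a+b-1\le m\le n$, so every such square lies on the board. Moreover $i+j=1+b+2s$ depends only on $s$, and $i-j=1-b+2t$ depends only on $t$. Therefore the map $(s,t)\mapsto(i,j)$ is injective, all the chosen squares have the same colour, and two of them are adjacent iff they share $s$ or share $t$. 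This is exactly an induced $\mathrm{Rook}_{a,b}$.

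For the first term of the maximum take $a=\lfloor m/2\rfloor$ and $b=\lceil m/2\rceil+1$, so that $a+b-1=m$. For the second term take $a=b=\lfloor (m+1)/2\rfloor$, so that $2a-1\le m$. Monotonicity under induced subgraphs then gives both lower bounds, and hence their maximum.

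The only delicate step I expect is the coordinate bookkeeping: checking that the embedding stays inside the board and that no extra adjacencies appear. Both are settled by the explicit formulas for $i+j$ and $i-j$ above.
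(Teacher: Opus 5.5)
Your proof is correct. For the lower bound it is the paper's argument made explicit. The paper only asserts that $\mathrm{Bishop}_{m,n}$ contains the two rook graphs as induced subgraphs. Your map $(s,t)\mapsto(1+s+t,\,b+s-t)$, with $i+j$ depending only on $s$ and $i-j$ only on $t$, actually proves it.

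For the upper bound your route differs from the paper's, and yours is the one that works. The paper claims that the whole graph $\mathrm{Bishop}_{m,n}$ is an induced subgraph of $\mathrm{Rook}_{n,n}$. This is false in general:
for $m=n$ both graphs have $n^2$ vertices, but $\mathrm{Bishop}_{n,n}$ is disconnected while $\mathrm{Rook}_{n,n}$ is connected. Already $2K_2$ is not an induced subgraph of $C_4$. Similarly, for $(m,n)=(2,3)$ the graph $\mathrm{Bishop}_{2,3}=2P_3$ is not induced in $\mathrm{Rook}_{3,3}$, because every induced $P_3$ in a rook graph occupies two rows and two columns.

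You sidestep this in two steps. First, you embed each colour class separately in $\mathrm{Rook}_{n,n}$; this is valid because a fixed parity class of the values of $i+j$ (or of $i-j$) has at most $\lceil (m+n-1)/2\rceil\le n$ elements. Second, you glue $k$-uniform words of the two components by concatenation, where $k=\mathcal{R}(\mathrm{Rook}_{n,n})\ge 2$ rules out alternation across components. This costs one standard padding lemma, but it gives a valid proof of the stated inequality, which the paper's one-line justification does not.

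One small correction: drop the restriction $2\le j$ in your padding sentence. A colour class can be complete; for example, both components of $\mathrm{Bishop}_{2,2}$ are $K_2$. The padding works for every $j\ge 1$: if $w$ is $j$-uniform and $p(w)$ is its initial permutation, then $p(w)w$ is a $(j+1)$-uniform word representing the same graph.
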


\begin{proof} The proof follows from the fact that, for $m \leq n$, the graph $\mathrm{Bishop}_{m,n}$ is an induced subgraph of the graph $\mathrm{Rook}_{n,n}$, and also, $\mathrm{Bishop}_{m,n}$ contains both 
$\mathrm{Rook}_{\left\lfloor \frac{m}{2} \right\rfloor,\; \left\lceil \frac{m}{2} \right\rceil + 1}$ 
and 
$\mathrm{Rook}_{\left\lfloor \frac{m+1}{2} \right\rfloor,\; \left\lfloor \frac{m+1}{2} \right\rfloor}$ 
as induced subgraphs. \end{proof}

Note that $\mathrm{Bishop}_{4,4}$ contains a triangular prism, which is known to be not 2-representable \cite{Kit13}, and hence $\mathcal{R}(\mathrm{Bishop}_{4,4}) \geq 3$. The following theorem completes the characterization of all 2-representable (i.e., circle) bishop graphs.

\begin{theorem}
The graph $\mathrm{Bishop}_{3,n}$ is $2$-representable for all $n \ge 3$.
\end{theorem}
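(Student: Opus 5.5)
We will handle the two colour classes of $\mathrm{Bishop}_{3,n}$ separately and then concatenate the resulting 2-uniform words, as allowed by the remark at the start of this subsection. First we set up notation for one colour class. Its middle-row vertices are $a_j=x_{2,j}$ for $j$ of one parity. Its outer vertices are $b_k=x_{1,k}$ and $c_k=x_{3,k}$ for $k$ of the other parity. On a board with three rows every diagonal has at most three squares, so each edge of the component lies in one of the triangles (or truncated triangles at the ends)
\[
T^+_j=\{b_{j-1},a_j,c_{j+1}\},\qquad T^-_j=\{c_{j-1},a_j,b_{j+1}\}.
\]
Consequently, when we add column $n$ to a board of width $n-1$, the new vertices have few neighbours among the old ones:
\begin{itemize}
\item if column $n$ is a middle-type column, the new vertex $a_n$ is adjacent exactly to $b_{n-1}$ and $c_{n-1}$, which are non-adjacent;
\item if column $n$ is an outer-type column, $b_n$ is adjacent exactly to $a_{n-1}$ and $c_{n-2}$, and $c_n$ is adjacent exactly to $a_{n-1}$ and $b_{n-2}$.
\end{itemize}
In the second case each new vertex is joined to the two endpoints of an old edge. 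The two new outer vertices are non-adjacent to each other.

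The construction is by induction on $n$, in the style of the proof of Theorem~\ref{King4}. It rests on a basic insertion rule for 2-uniform words. If a 2-uniform word $w$ contains $uv$ as a factor, with the two letters in either order, then replacing that factor by $x\,u\,v\,x$ produces a new letter $x$ that alternates with $u$ and $v$ and with nothing else. The same holds with a single letter $u$: replacing $u$ by $xux$ gives a pendant vertex. More generally, $x$ alternates exactly with those letters that occur once inside the interval between its two occurrences. The induction invariant will therefore say that the current word contains suitable factors made of occurrences of the letters that are still ``open'', i.e. those that will receive neighbours in later columns. Concretely, after column $n$ the invariant must supply:
\begin{itemize}
\item a factor made of one occurrence of $b_{n-1}$ and one of $c_{n-1}$, to be used if column $n+1$ is middle-type;
\item if column $n$ is middle-type, two disjoint factors consisting of an occurrence of $a_n$ next to an occurrence of $c_{n-1}$, and an occurrence of $a_n$ next to an occurrence of $b_{n-1}$, to be used for $b_{n+1}$ and $c_{n+1}$.
\end{itemize}
After each insertion we will check which of these factors survive. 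We will also check that the new insertions create the factors needed one step later. For example, the pattern $x\,u\,v\,x$ creates the new adjacencies $xu$ and $vx$.

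The main obstacle is that inserting $b_{n+1}$ and $c_{n+1}$ can destroy the adjacencies needed for the next middle vertex $a_{n+2}$. That vertex must alternate with both $b_{n+1}$ and $c_{n+1}$, and $b_{n+1},c_{n+1}$ are non-adjacent, so their intervals must be nested or disjoint. I will first try placing the two insertions so that an occurrence of $b_{n+1}$ and an occurrence of $c_{n+1}$ end up consecutive in the word. One way is to insert $b_{n+1}$ around the factor $c_{n-1}a_n$ and $c_{n+1}$ around the factor $a_nb_{n-1}$, arranged as $b_{n+1}\,c_{n-1}\,a_n\,b_{n+1}\,c_{n+1}\,a_n\,b_{n-1}\,c_{n+1}$ when the occurrences of $a_n$ can be made to straddle the middle. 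This requires the invariant to keep both occurrences of $a_n$ inside one controlled factor. If it does, then $a_{n+2}$ is inserted around $b_{n+1}c_{n+1}$, and the resulting pattern reproduces the same shape two columns later.

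Finally, we verify the base cases $n=3$ and $n=4$ for both colour classes explicitly, including the end effects where the triangles are truncated to edges, and we check the alternations directly. Concatenating the words for the two components then gives a 2-representation of $\mathrm{Bishop}_{3,n}$. Since $\mathrm{Bishop}_{3,n}$ is not complete, $\mathcal{R}=2$ exactly.
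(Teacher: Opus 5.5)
Your framework is the paper's: the rule $pq\mapsto zpqz$ applied column by column, with an invariant guaranteeing suitable two-letter factors. However, the one concrete inductive step you write down fails.

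In
\[
b_{n+1}\,c_{n-1}\,a_n\,b_{n+1}\,c_{n+1}\,a_n\,b_{n-1}\,c_{n+1}
\]
both occurrences of $a_n$ appear, and only $b_{n+1}c_{n+1}$ lies between them. Deleting the two new letters shows that the previous word contained the factor $c_{n-1}a_na_nb_{n-1}$, so $a_n$ was isolated. Hence $a_n=x_{2,n}$ would alternate only with $b_{n+1}$ and $c_{n+1}$. It would lose its edges to $b_{n-1}=x_{1,n-1}$ and $c_{n-1}=x_{3,n-1}$. Your proviso ``when the occurrences of $a_n$ can be made to straddle the middle'' can therefore never be met consistently. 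The claim that the shape reproduces itself two columns later inherits the same error.

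The fix is to reverse the orientation of the two pairs, so that the occurrences of $a_n$ are the \emph{outer} letters. Inserting $a_n$ around the factor formed by $b_{n-1}$ and $c_{n-1}$ gives, say, $a_n\,c_{n-1}\,b_{n-1}\,a_n$. Wrapping $b_{n+1}$ around the first pair and $c_{n+1}$ around the last pair gives
\[
b_{n+1}\,a_n\,c_{n-1}\,b_{n+1}\,c_{n+1}\,b_{n-1}\,a_n\,c_{n+1}.
\]
Here $a_n$ alternates with exactly $c_{n-1},b_{n+1},c_{n+1},b_{n-1}$, and $b_{n+1}$ and $c_{n+1}$ do not alternate. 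The consecutive pair $b_{n+1}c_{n+1}$ is exactly the factor needed to insert $a_{n+2}$. This is precisely the paper's step, with the roles of the letters $a$ and $b$ interchanged.

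Two smaller points. Your first invariant bullet is off by one: when column $n$ is outer-type, the factor needed for $a_{n+1}$ consists of $b_n$ and $c_n$, not $b_{n-1}$ and $c_{n-1}$. Also, you need no separate base cases $n=3,4$. Start from $x_{1,1}x_{3,1}x_{3,1}x_{1,1}$ for one colour class and from $x_{1,2}x_{2,1}x_{1,2}x_{3,2}x_{2,1}x_{3,2}$ for the other, as the paper does. The recursion then handles the truncated diagonals at both ends automatically, because the last column simply receives no further insertions.
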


\begin{proof}
Put $a_j=x_{1,j}$, $b_j=x_{2,j}$, and $c_j=x_{3,j}$. We use the following elementary insertion rule. If $pq$ is a factor of a 2-uniform word and $z$ is new, replacing $pq$ by $zpqz$ makes $z$ alternate precisely with $p$ and $q$: exactly one occurrence of each lies between the two occurrences of $z$, and no occurrence of any other letter does.

First consider the color component containing $a_1$ and $c_1$. Begin with
\[
a_1c_1c_1a_1,
\]
which contains the factor $c_1a_1$. The other color component is initialized, for $n\geq2$, by
\[
a_2b_1a_2c_2b_1c_2,
\]
which represents the path $a_2b_1c_2$ and contains the factor $a_2c_2$; for $n=1$, it is represented by $b_1b_1$.

We now give the common recursive step. Suppose that the columns through an outer-vertex column $j$ have been represented and that the word contains a factor
$F\in\{c_ja_j,a_jc_j\}$. To add $b_{j+1}$, replace $F$ by $b_{j+1}Fb_{j+1}$. This gives exactly the edges from $b_{j+1}$ to $a_j$ and $c_j$. If $j+1=n$, the construction is complete. Otherwise, the new factor contains two disjoint adjacent pairs: let $F_1$ be the pair consisting of $b_{j+1}$ and $c_j$, in their current order, and let $F_2$ be the pair consisting of $a_j$ and $b_{j+1}$. These pairs occur as $F_1F_2$ or $F_2F_1$.

To add the two vertices in column $j+2$, replace
\[
F_1\quad\hbox{by}\quad a_{j+2}F_1a_{j+2},
\qquad
F_2\quad\hbox{by}\quad c_{j+2}F_2c_{j+2}.
\]
The insertion rule gives exactly the new edges: $a_{j+2}$ is adjacent to $b_{j+1}$ and $c_j$, while $c_{j+2}$ is adjacent to $a_j$ and $b_{j+1}$. The two new letters do not alternate with each other, and the resulting word contains $a_{j+2}c_{j+2}$ or $c_{j+2}a_{j+2}$ as a factor, so the invariant is restored. Repeating the two steps gives a 2-representation of each color component of $\mathrm{Bishop}_{3,n}$. Concatenating the two words gives the required 2-representation of the whole graph.
\end{proof}

\subsection{Knight  graphs}\label{knight-sec} 

Knight graphs are bipartite and hence word-representable (any 3-colourable graph is word-representable~\cite{KL15}), so the representation number of any such graph is finite. In what follows, we characterize all knight graphs that are circle graphs; equivalently, apart from complete graphs, these are precisely the knight graphs with representation number~2.

Note that $\mathcal{R}(\mathrm{Knight}_{1,1}) = 1$ and $\mathcal{R}(\mathrm{Knight}_{1,n}) = 2$ for $n \geq 2$, since $\mathrm{Knight}_{1,n}$ is an  independent set for all $n \geq 1$. Also, $\mathcal{R}(\mathrm{Knight}_{2,n}) = 2$ for any $n \geq 1$, since this graph is a disjoint union of four 2-representable paths. Moreover, $\mathcal{R}(\mathrm{Knight}_{3,3}) = 2$, since $\mathrm{Knight}_{3,3}$ is a disjoint union of the 2-representable cycle $C_8$ and isolated vertex $x_{2,2}$. A 2-representation of $\mathrm{Knight}_{3,3}$ is
\[
x_{2,2}x_{2,2}x_{1,1}x_{3,2}x_{2,3}x_{1,1}x_{3,1}x_{2,3}x_{1,2}x_{3,1}x_{3,3}x_{1,2}x_{2,1}x_{3,3}x_{1,3}x_{2,1}x_{3,2}x_{1,3}.
\]
Computer experiments produced the following 2-representation of $\mathrm{Knight}_{3,4}$ and suggest that it is unique up to reversal and cyclic shift:
\begin{align*}
&x_{1,1}x_{3,2}x_{2,3}x_{1,1}x_{3,1}x_{2,3}x_{1,2}x_{3,1}x_{2,4}\\
&~~~~~~~~~~~~~~~~~~~~x_{3,3}x_{1,2}x_{2,1}x_{1,4}x_{3,3}x_{2,2}x_{1,4}x_{3,4}x_{2,2}x_{1,3}x_{3,4}x_{2,1}x_{3,2}x_{1,3}x_{2,4}
\end{align*}
The displayed word proves that $\mathcal{R}(\mathrm{Knight}_{3,4}) = 2$; only the uniqueness assertion is based on computation.

Finally, $\mathrm{Knight}_{3,5}$ and $\mathrm{Knight}_{4,4}$  are not 2-representable, because after removing the vertices of $\{x_{2,1}, x_{2,5}, x_{3,5}\}$ and $\{x_{2,1}, x_{4,1}, x_{4,3}, x_{4,4}\}$, respectively, one obtains subdivisions of the %Bicolored Wheel
 graph $BW_3$ (shown in Figure~\ref{BW3}) % on 6 vertices (sometimes called the ``Butterfly-Wheel''). According to 
and by Bouchet’s theorem~\cite{Bouchet}, circle graphs cannot contain such induced subgraphs.

\begin{figure}
\begin{center}
\begin{tikzpicture}[scale=0.3]
			
\draw (2,5) node[scale=0.5, circle, draw, fill=black] (node1) {};
\draw (5,8) node [scale=0.5, circle, draw, fill=black](node2){};
\draw (9,8) node [scale=0.5, circle, draw, fill=black](node3){};
\draw (12,5) node [scale=0.5, circle, draw, fill=black](node4){};
\draw (9,2) node [scale=0.5, circle, draw, fill=black](node5){};
\draw (5,2) node [scale=0.5, circle, draw, fill=black](node6){};
\draw (8,5)node [scale=0.5, circle, draw, fill=black](node7){};
			
\draw (node1)--(node2);
\draw (node2)--(node3);
\draw (node3)--(node4);
\draw (node4)--(node5);
\draw (node5)--(node6);
\draw (node6)--(node7);
\draw (node6)--(node1);
\draw (node2)--(node7);
\draw (node7)--(node4);
\end{tikzpicture}

\end{center}
\caption{The graph $BW_3$} \label{BW3}
\end{figure}

\section{Open problems}

In this paper, we provide a complete characterization of the representation number of $\mathrm{Queen}_{m,n}$. We leave the following as open problems regarding finding the representation number, which is finite:
\begin{itemize}
\item $\mathrm{King}_{m,n}$ for $n\geq m \geq 5$, which is conceivable to have representation number 3, though we are unable to prove or disprove this;
\item $\mathrm{Rook}_{m,n}$ for $n\geq4$ and $3\leq m\leq n$ (computer experiments suggest that $\mathcal{R}(\mathrm{Rook}_{3,n})\geq4$ for $n\geq4$, but this lower bound has not been proved);
\item $\mathrm{Bishop}_{m,n}$ for $n \geq m \geq 4$ (possibly using Proposition~\ref{bishop-bounds}; recall that $\mathcal{R}(\mathrm{Bishop}_{4,n}) \geq 3$ for $n \geq 4$);
\item $\mathrm{Knight}_{m,n}$ for $n\geq m \geq 3$, which is also conceivable to have representation number 3, but again, we cannot prove or disprove it.
\end{itemize}

\section*{Acknowledgments} The authors thank the anonymous reviewers for their careful reading and helpful comments, which improved the presentation of the paper. The work of the second author was supported by the research project of the Sobolev Institute of Mathematics (project FWNF-2022-0019).


\begin{thebibliography}{20}

\bibitem{AKM} P. Akrobotu, S. Kitaev, and Z. Mas\'{a}rov\'{a}. On word-representability of polyomino triangulations. {\em Siberian Adv. in Math.}, {\bf 25(1)} (2015) 1--10.

\bibitem{AKP2025} N.S. Alshammari, S. Kitaev, and A. Pyatkin. On the representation number of grid graphs and cylindric grid graphs, 	arXiv:2507.16469 (2025).

\bibitem{Bouchet} A. Bouchet. Circle Graph Obstructions. {\em J. Combin. Theory Ser. B} {\bf 60(1)} (1994) 107--144.

\bibitem{Bro18} B. Broere. Word-Representable Graphs, (Master thesis), Radboud University, Nijmegen, 2018.

\bibitem{BroZan19} B. Broere and H. Zantema. The $k$-dimensional cube is $k$-representable, {\em J. Autom. Lang. Comb.} {\bf 24 (1)} (2019) 3--12.

\bibitem{Bur2016} P.A. Burchett. $k$-tuple domination on the bishop  graph. {\em Util. Math.} {\bf 101} (2016) 351--358.

\bibitem{BLL} P.A. Burchett, D. Lane, and J.A. Lachniet, $k$-tuple and $k$-domination on the rook  graph and other results, {\em Congr. Numer.} {\bf 199} (2009) 187--204.

\bibitem{Cibu2024} N.~Cibu, K.~Ding, S.~DiSilvio, S.~Kononova, C.~Lee, R.~Morrison, and K.~Singal,
The gonality of chess graphs, arXiv:2403.03907, 2024.

%\bibitem{Glen} M. Glen. Software available at  \verb>spider-v.science.strath.ac.uk/> \verb>sergey.kitaev/word-representable-graphs.html>

\bibitem{Fleisch} P. Fleischmann, L. Haschke, T. L\"ock, and D. Nowotka. Word-representable graphs from a word’s perspective. In {\em Int'l Conf. on Curr. Trends in Theory and Pract. of Comput. Sci.}, pages 255--268. Springer, 2024.
	
\bibitem{GKP18} M. Glen, S. Kitaev, and A. Pyatkin. On the representation number of a crown graph, {\em Discrete Appl. Math.} {\bf 244} (2018), 89--93. 

\bibitem{Haynes1998} T.~W. Haynes, S.~T. Hedetniemi, and P.~J. Slater (eds.), \emph{Domination in Graphs: Advanced Topics}, Marcel Dekker, New York, 1998.

\bibitem{HHHH} J. T. Hedetniemi, K. D. Hedetniemi, S. M. Hedetniemi, S. T. Hedetniemi. On the domatic numbers of queens graphs. {\em Congr. Numer.} {\bf 235} (2025), 5--21.

\bibitem{HHMO24} Z. Hefty, P. Horn, C. Muir, and A. Owens. Word-Representable Graphs: Orientations, Posets, and Bounds, {\em Electron. J. Combin.} {\bf 31(4)} (2024), \#P4.2.

\bibitem{Kit13} S. Kitaev. On graphs with representation number 3, {\em J. Autom. Lang. Combin.} {\bf 18(2)} (2013) 97--112.
	
\bibitem{KL15} S. Kitaev and V. Lozin. Words and Graphs, {\em Springer}, 2015.

\bibitem{KitPya08} S. Kitaev and A. Pyatkin. On representable graphs,  {\em J. Autom. Lang. Comb.} {\bf 13} (2008), no. 1, 45--54.

\bibitem{KP18} S.V. Kitaev and A.V. Pyatkin.  Word-Representable Graphs: a Survey, {\em J. Appl. and Industr. Math.} {\bf 12} (2018), no.~2, 278--296. 

\bibitem{KitSei} S. Kitaev, S. Seif. Word problem of the Perkins semigroup via directed acyclic graphs, {\em Order} {\bf 25} (2008)
  3, 177--194.

\bibitem{Ma} D. Ma. The crossing number of the strong product of two paths. {\em Australas. J. Combin.} {\bf 68} (2017) 35--47.
 
\bibitem{SowndaryaNaidu2020} K.~S.~P. Sowndarya and Y.~Lakshmi Naidu. Perfect Domination Separation on Square Chessboard,
\emph{Malaya Journal of Matematik} {\bf 8(4)} (2020) 1497--1501.

\bibitem{VWSFN} R. J. Vargas, A. Waldron, A. Sharma,  R. Fl\'{o}rez, D. A. Narayan. Leverage centrality of knight  graphs and Cartesian products of regular graphs and path powers. {\em Involve} {\bf 10} (2017), no. 4, 583--592.	

\bibitem{Watkins1996} M. E. Watkins, \emph{Across the Board: The Mathematics of Chessboard Problems},
Princeton University Press, 2004.

\bibitem{Wilf} H. Wilf. The Problem of the Kings. {\em Electron. J. Combin.} {\bf 2} (1995), \#R3.	
		
	\end{thebibliography}
\end{document}